\documentclass[11pt,letterpaper]{svmult}
\usepackage{blindtext}
\usepackage{amsthm}
\usepackage{amsmath}
\usepackage{amsfonts}
\usepackage{amssymb}
\usepackage{mathrsfs}
\usepackage{hyperref}
\usepackage{cleveref}
\newtheorem{thm}{Theorem}[section]
\newtheorem{lem}[thm]{Lemma}
\newtheorem{defn}[thm]{Definition}
\newtheorem{prop}[thm]{Proposition}
\newtheorem{cor}[thm]{Corollary}
\theoremstyle{remark}
\newtheorem*{thm*}{Theorem}

\newtheorem*{conj}{Conjecture}

\counterwithin{equation}{section}

\newtheorem{rem}[thm]{Remark}

\newtheorem{fact}[thm]{Fact}

\newcommand{\bbN}{\mathbb{N}}

\begin{document}
\title*{The geometry of branched coverings of hyperbolic manifolds}
\titlerunning{The geometry of branched covers}
\author{Ursula Hamenst\"adt}
\institute{Mathematisches Institut der Universität Bonn, Endenicher Allee 60, 53115 Bonn, Germany, \email{ursula@math.uni-bonn.de} \\
AMS subject classification: 53C25, 53C24, 22E40\\
Keywords: Branched coverings, hyperbolic cone metrics, hyperbolic 3-manifolds versus hyperbolic 3-orbifolds, Einstein metrics, 
geometric rigidity of branched coverings\\
Partially supported by the DFG Schwerpunktprogramm "Geometry at infinity"
}
\date{May 11, 2024}

\maketitle

\abstract{
We discuss geometric properties of 
covers of closed hyperbolic manifolds of dimension $n\geq 3$, branched along 
a totally geodesic codimension two submanifold $\Sigma$.
We survey what is known about the existence of Einstein metrics on such manifolds. 
In dimension $n\geq 4$, we show that
for suitable branch loci 
$\Sigma$, at most one of these branched coverings admits a
hyperbolic metric. 
 }

\section{Introduction}

As a consequence of the solution to the geometrization conjecture by Perelman, 
any closed manifold of dimension three which admits a negatively curved metric
also admits a hyperbolic metric, that is, a metric of constant curvature $-1$. 
The existence of hyperbolic metrics on closed surfaces of negative
Euler characteristic is 
a classical consequence of the uniformization theorem. An analogous property is not 
true any more for closed negatively curved manifolds of dimension at least four.

Gromov and Thurston \cite{GT87}  considered finite
cyclic coverings of arithmetic hyperbolic manifolds of simplest type and dimension
$n\geq 4$, branched along a 
null-homolo\-gous totally 
geodesic submanifold of codimension two. Such manifolds will be called
\textit{Gromov Thurston manifolds} in the sequel.
They showed that suitable choices of 
such manifolds
admit metrics whose sectional curvature is contained in the interval $[-1-\epsilon,-1]$
where $\epsilon >0$ can be arbitrarily prescribed, 
but which 
do not admit a hyperbolic metric. 
The proof of non-existence of a hyperbolic
metric on such manifolds is however indirect, that is, 
it it shown that among an infinite collection of candidate manifolds with
pinched curvature, at most  finitely many 
admit hyperbolic metrics.

This leads to the question of the existence of distinguished 
metrics on Gromov Thurston manifolds. A class of metrics which generalizes constant curvature 
metrics on 3-manifolds are \emph{Einstein metrics} $h$, characterized by the property that
their Ricci curvature ${\rm Ric}_h$ satisfies ${\rm Ric}_h= ch$ for a constant $c\in \mathbb{R}$.

Part of the following conjecture was phrased as a question in \cite{GT87}.

\begin{conj}
  A nontrivial finite covering of a closed hyperbolic manifold $M$ of dimension $n\geq 4$, branched
  along a closed totally geodesic submanifold of codimension two, admits a unique Einstein metric up to scaling,
  and it does not admit a metric of constant curvature.
  \end{conj}

Progress towards this conjecture 
is due to Fine and Premoselli \cite{FP20} who constructed
negatively curved Einstein metrics of non-constant curvature 
on some four-dimensional
Gromov Thurston manifolds.
That in dimension four closed hyperbolic manifolds admit a unique Einstein
metric up to scaling and isometry
follows from
the work of Besson, Courtois and Gallot (see Corollary 4.6 of \cite{And10} for an explicit statement). 
Thus the last part of the above conjecture holds for some four-dimensional Gromov Thurston manifolds. 
The results in \cite{FP20} were extended in 
\cite{HJ24} as follows. 

\begin{theorem}[Hamenst\"adt-J\"ackel \cite{HJ24}]\label{main1}
For any $n\geq 4$, there are Gromov Thurston manifolds 
of dimension $n$ which admit a negatively curved 
Einstein metric but no hyperbolic metric. 
\end{theorem}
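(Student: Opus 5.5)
The proof has two independent halves: constructing a negatively curved Einstein metric on a suitable Gromov Thurston manifold, and ruling out a hyperbolic metric on the same manifold.

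\emph{Setup.} Let $M$ be a closed arithmetic hyperbolic $n$-manifold of simplest type, $n\geq 4$, containing a null-homologous closed totally geodesic codimension two submanifold $\Sigma$. Pass to a finite cover if necessary, so that the normal injectivity radius $R$ of $\Sigma$ is as large as we like. Let $M_d\to M$ be the $d$-fold cyclic cover branched along $\Sigma$. Pulling back the hyperbolic metric gives a cone metric on $M_d$ with cone angle $2\pi d$ along the preimage $\hat\Sigma$ of $\Sigma$.

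\emph{Approximate Einstein metric.} Following \cite{GT87} and \cite{FP20}, replace the cone metric in a tubular neighborhood of $\hat\Sigma$ of radius $\sim R$ by an explicit model. Near $\hat\Sigma$ use a warped product $dr^2+a(r)^2d\theta^2+b(r)^2g_{\mathbb H^{n-2}}$ which is exactly Einstein with $\mathrm{Ric}=-(n-1)g$. The natural choice is a hyperbolic Einstein filling (a ``black hole'' type metric), with the parameter chosen so that the $\theta$-circle closes smoothly with period $2\pi d$ at $r=0$. As $r\to\infty$ this model converges exponentially fast to the hyperbolic metric written in Fermi coordinates about $\mathbb H^{n-2}$. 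Interpolate to the hyperbolic metric at distance about $R$. The result is a smooth metric $g_0$ with sectional curvature close to $-1$ and Einstein defect $\|\mathrm{Ric}(g_0)+(n-1)g_0\|_{C^{0,\alpha}}\le Ce^{-cR}$, supported in the gluing annulus.

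\emph{Perturbation to an Einstein metric.} Solve the Einstein equation in Bianchi gauge, $\Phi(g)=\mathrm{Ric}(g)+(n-1)g+\delta_g^*B_{g_0}(g)=0$, by the inverse function theorem around $g_0$. The linearization is $\tfrac12(\Delta_L+2(n-1))$ acting on symmetric 2-tensors. \textbf{This is the main obstacle:} we need a uniform bound on the inverse, independent of $R$, in weighted H\"older spaces. On the hyperbolic part a Koiso type Bochner argument gives a spectral gap for the Lichnerowicz operator in negative curvature. On the model region we must show the model Einstein metric is nondegenerate, i.e.\ it has no decaying infinitesimal Einstein deformations. I would argue by contradiction and blow-up: assume a sequence with norm-one kernels, localize, and pass to limits on either $\mathbb H^n$ or the model. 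On $\mathbb H^n$ an integration by parts kills the limit. On the model, use the $S^1\times$ isometries of $\mathbb H^{n-2}$ to separate variables into ODEs, and check the ODEs directly; \cite{FP20} did this for $n=4$, and the general case should follow similarly. Once the inverse is uniformly bounded and the defect is exponentially small, a fixed point argument gives an Einstein metric $g$ that is $C^2$-close to $g_0$. Its curvature is then close to the model's and hence negative.

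\emph{No hyperbolic metric.} Use the indirect argument of \cite{GT87}. Hyperbolic volume equals $d\cdot\mathrm{vol}(M)$ by Mostow rigidity and the Gauss--Bonnet/Chern--Weil formula (the Euler characteristic is multiplicative outside the branch locus, with a correction from $\chi(\Sigma)$). Meanwhile the injectivity radius grows with $R$ for fixed $d$. Wang's finiteness theorem bounds the number of hyperbolic manifolds of bounded volume. Together with the fact that $M_d$ contains totally geodesic-like submanifolds of large normal injectivity radius with cone excess, this shows that for $R$ large enough (in a tower of congruence covers) $M_d$ is not hyperbolic. Combining with the Einstein construction for the same large $R$ proves the theorem.
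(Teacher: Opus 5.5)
Your Einstein half follows the route the paper sketches. You glue the Fine--Premoselli model $g_{2\pi/d}$ of Proposition~\ref{prop:cone} into a large collar of $\Sigma$, pull back to $M_d$, and perturb with an implicit function theorem. One slip: near the branch locus $g_0$ is negatively curved but not close to $-1$. The genuine gap is the non-hyperbolicity half, which your argument does not establish.

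\emph{The volume claim is false.} A hyperbolic metric on $M_d$ does not have volume $d\,{\rm vol}(M)$. In even dimensions Chern--Gauss--Bonnet makes the volume proportional to $|\chi(M_d)|=|d\chi(M)-(d-1)\chi(\Sigma)|$. Since $\chi(M)$ and $\chi(\Sigma)$ have opposite signs, this exceeds $d\,{\rm vol}(M)$ by a positive multiple of $(d-1){\rm vol}(\Sigma)$. In odd dimensions $\chi$ gives nothing. In neither case does a volume count produce a contradiction.

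\emph{Wang's theorem does not apply.} It needs a uniform volume bound, but in your family ($d$ fixed, $R\to\infty$ along a congruence tower) the volumes tend to infinity. Large injectivity radius is perfectly compatible with hyperbolicity: the tower itself consists of hyperbolic manifolds. The ``cone excess'' lives in the pulled-back cone metric and imposes nothing on a hypothetical hyperbolic metric.

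\emph{The Gromov--Thurston argument is indirect.} Even in a correct form it only shows that at most finitely many members of an infinite family are hyperbolic, without saying which. Your gluing, however, needs the normal injectivity radius to exceed a threshold depending on $d$. So nothing guarantees that a manifold carrying your Einstein metric is one of the non-hyperbolic ones.

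The missing idea is the \emph{special} branch locus used in the paper (following Hamenst\"adt--J\"ackel): $\Sigma$ is null-homologous in a totally geodesic hypersurface $H$ lying in the fixed set of an isometric involution of $M$. The argument then runs as follows.
\begin{enumerate}
\item This structure gives involutions $j$ of $M_d$ whose Mostow realizations $j^\#$ have fixed sets diffeomorphic and homotopic to the topological ones (Proposition~\ref{fixed}).
\item The realized rotation $\zeta^\#$ turns the normal bundle of $\Sigma^\#$ by $2\pi/d$, and $F^\#\cap\zeta^\#(F^\#)=\Sigma^\#$ (Lemma~\ref{intersection}).
\item Hence a hyperbolic $M_d$ is cut by the totally geodesic hypersurfaces $(\zeta^\#)^i(F^\#)$ into sectors meeting at angle $2\pi/d$ along $\Sigma^\#$ (Proposition~\ref{prop:boundaryrigidity}).
\item Suppose $M_{d_1}$ and $M_{d_2}$ were both hyperbolic with $d_1\neq d_2$. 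Gluing sectors of the two degrees gives a smooth hyperbolic manifold with the same combinatorial symmetry. In it the relevant hypersurfaces are already totally geodesic and meet at angles $2\pi/d_1$ and $2\pi/d_2$. But the Mostow-realized rotation forces all consecutive angles to be equal, a contradiction.
\end{enumerate}
So $M_d$ is hyperbolic for at most one $d$ (Theorem~\ref{mainthm}). Choosing a special $\Sigma$ whose normal injectivity radius is large enough for the Einstein construction at two distinct degrees then yields the theorem. Your plan needs this ingredient, or some other direct obstruction, in place of the volume/finiteness heuristic.
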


The article \cite{GH25} contains an analogous result in the K\"ahler case.

The fact that the examples of \cite{HJ24} 
do not admit a hyperbolic metric uses some additional property of the branched covers
considered. Namely, the branch locus $\Sigma$ of the base hyperbolic manifold, 
which is a closed totally geodesic submanifold of codimension two, is contained 
in a closed totally geodesic submanifold  
$H$ of codimension one, and it is homologous to zero in $H$. 
The manifold $H$ in turn is contained in the fixed point
set of an isometric involution of $M$. 
We call such a submanifold $\Sigma$ of a closed hyperbolic manifold $M$
\emph{special} in the sequel. 
In \cite{HJ24}, it was shown that among the covers of degree
a multiple of four, branched along a special submanifold, 
at most one can admit a hyperbolic metric.

The following result improves this statement using the arguments in \cite{HJ24}
and
collects some facts which are known to the experts but for which 
we could not find a reference in the literature.

\begin{theorem}\label{mainthm}
Let $M$ be an $n $-dimensional closed hyperbolic manifold and let $\Sigma\subset M$ be
a closed totally geodesic embedded oriented 
submanifold of codimension two. 
\begin{enumerate}
\item For a number $d\geq 2$ there exists a cyclic covering $M_d$ of $M$ of degree $d$, branched along 
$\Sigma$, if the class of $\Sigma$ in $H_{n-2}(M,\mathbb{Z}/d\mathbb{Z})$ vanishes.
\item If $n=3$ and if a cyclic branched covering $M_d$ of $M$ exists, then $M_d$ admits a hyperbolic metric, and  
the non-compact manifold $M\setminus \Sigma$ admits a complete 
hyperbolic metric of finite volume. Furthermore, $M$ can both be represented as a quotient of 
$\mathbb{H}^3$ by a cocompact torsion free lattice, and by a lattice which is not torsion free. 
\item If $n\geq 4$ and if $\Sigma$ is special, 
then $M_d$ admits a hyperbolic metric for at most one degree $d$.
\end{enumerate}
\end{theorem}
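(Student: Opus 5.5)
For the first part I will use the standard correspondence between cyclic branched covers and cohomology classes of the complement. Let $N$ be a closed tubular neighborhood of $\Sigma$ and $X=M\setminus {\rm int}(N)$. Since $\Sigma$ is oriented and $M$ is orientable, the normal bundle is an oriented plane bundle. Hence $\partial N$ is a circle bundle over $\Sigma$, with a well defined oriented meridian class $\mu\in H_1(X)$. By Lefschetz/Alexander duality, $H^1(X;\mathbb{Z}/d)\cong H_{n-1}(M,\Sigma;\mathbb{Z}/d)$. The long exact sequence of the pair contains
\[H_{n-1}(M,\Sigma;\mathbb{Z}/d)\to H_{n-2}(\Sigma;\mathbb{Z}/d)\to H_{n-2}(M;\mathbb{Z}/d).\]
Vanishing of $[\Sigma]$ in $H_{n-2}(M;\mathbb{Z}/d)$ therefore gives a relative class $[S]$ with $\partial [S]=[\Sigma]$, that is, a mod $d$ "Seifert hypersurface". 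The dual class $\phi\in H^1(X;\mathbb{Z}/d)$ satisfies $\phi(\mu)=1$. Its kernel defines a connected $d$-fold cyclic cover of $X$ which restricts on each fiber circle of $\partial N$ to the connected $d$-fold cover. I can then glue in a copy of $N$, using the map $z\mapsto z^d$ fiberwise on the disk bundle, to obtain $M_d$.

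For the second part ($n=3$), $\Sigma$ is a union of closed geodesics, which form a link. First I will show that $M\setminus\Sigma$ is irreducible, atoroidal and not Seifert fibered. Irreducibility holds because $M$ is irreducible and a sphere meeting no geodesic bounds a ball; a compressing or essential torus would contradict the fact that the geodesics are nontrivial and $\pi_1(M)$ has no $\mathbb{Z}^2$. Thurston's hyperbolization theorem then gives a complete finite volume hyperbolic metric on $M\setminus\Sigma$. For $M_d$, the hyperbolic metric of $M$ lifts to a cone metric with cone angle $2\pi d>2\pi$ along the preimage of $\Sigma$. This metric is locally CAT$(-1)$, and it can be smoothed to a negatively curved metric near the branch locus, in the Gromov--Thurston manner. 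So $M_d$ is aspherical with word hyperbolic fundamental group, and geometrization (Perelman) gives a hyperbolic metric. For the last claim, I will use hyperbolic Dehn filling on $M\setminus\Sigma$. For all but finitely many $k$, the $(1/k)$-orbifold fillings, i.e. cone angle $2\pi/k$ along $\Sigma$, are hyperbolic orbifolds by Thurston's orbifold theorem/Dehn filling. Their underlying space is $M$, so $M=\mathbb{H}^3/\Gamma$ with $\Gamma$ a lattice with torsion.

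For the third part I will follow \cite{HJ24}. Suppose $M_d$ and $M_{d'}$ with $d\ne d'$ both carry hyperbolic metrics. The isometric involution $\iota$ of $M$ fixing $H\supset\Sigma$ lifts to $M_d$, because $\Sigma$ bounds in $H$ and so the cover can be built from a Seifert hypersurface inside ${\rm Fix}(\iota)$. By Mostow rigidity, the lift is homotopic to an isometric involution of the hyperbolic metric on $M_d$. The lift of $H$ is then a union of totally geodesic hypersurfaces whose intersection locus is the preimage $\hat\Sigma$ of $\Sigma$, which is itself totally geodesic. Around $\hat\Sigma$ there are $2d$ sheets of the fixed set meeting at equal dihedral angles $\pi/d$. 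The quotient $M_d/(\mathbb{Z}/d)$ under the deck group, which also acts isometrically by Mostow, is a hyperbolic orbifold with underlying space $M$ and cone angle $2\pi/d$ along $\Sigma$. Two such hyperbolic orbifold structures for different $d$ on the same pair $(M,\Sigma)$ give an $n$-dimensional hyperbolic cone-manifold deformation. I expect to rule this out for $n\ge4$ by volume or Schläfli considerations combined with local rigidity of hyperbolic cone manifolds with angles $\le\pi$ in dimension $\ge4$ (Weiss/Garland--Raghunathan type). This step, identifying the combinatorial structure of the fixed set and showing that two degrees are incompatible, is the main obstacle. What I would try first is comparing volumes: the Schläfli formula gives a strictly monotone volume in the cone angle, while the orbifold volume must equal ${\rm vol}(M_d)/d$ and ${\rm vol}(M)$ constraints contradict rigidity.
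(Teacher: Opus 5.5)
Part (1) of your proposal is fine; it is the Lefschetz-dual form of the paper's Thom class argument. The genuine gap is in part (3), which you do not actually prove.

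\textbf{Part (3).} Two hyperbolic cone or orbifold structures on $(M,\Sigma)$ with angles $2\pi/d\neq 2\pi/d'$ are not joined by any path of cone structures, and they are quotients by different groups. So the Schl\"afli formula, which needs a smooth family, has nothing to act on. Local rigidity (Weiss, Garland--Raghunathan, Kerckhoff--Storm) cannot exclude isolated structures; this is exactly the caveat of Remark~\ref{kerckhoffstorm}. The volume comparison also has no content, since ${\rm vol}(M_d)/d$ and ${\rm vol}(M_{d'})/d'$ are a priori unrelated.

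Moreover, for $n\ge 4$ you cannot simply assert two things. First, that the Mostow isometries have fixed sets with the same combinatorics as $\iota,\zeta$. Second, that $M_d/\langle\zeta^\#\rangle$ has underlying space $M$. Both require the input of Proposition~\ref{fixed} and Lemma~\ref{intersection}:
\begin{itemize}
\item fixed sets of $\phi^\#$ are diffeomorphic and freely homotopic to those of $\phi$;
\item $\zeta^\#$ rotates the normal bundle of $\Sigma^\#$ by $2\pi/d$;
\item $F^\#\cap\zeta^\#(F^\#)=\Sigma^\#$.
\end{itemize}

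The missing idea is a cut-and-paste hybrid.
\begin{itemize}
\item Take $d$ even and cut the hyperbolic $M_d$ along the totally geodesic hypersurfaces $(\zeta^\#)^i(F^\#)$. This yields a hyperbolic cone manifold $M_{\rm cut}^{2\pi/d}$ with $\pi_1=\pi_1(M_{\rm cut})$ (via van Kampen and normal forms in amalgams), totally geodesic boundary path isometric to $\partial M_{\rm cut}$, and dihedral angle $2\pi/d$ along $\Sigma$.
\item If two even degrees $d_1\neq d_2$ both worked, glue $d_1/2$ copies of $M_{\rm cut}^{2\pi/d_1}$ and $d_2/2$ copies of $M_{\rm cut}^{2\pi/d_2}$ cyclically. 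The result is a smooth closed hyperbolic manifold $Y$ whose fundamental group is that of the $(d_1+d_2)/2$-fold branched cover.
\item Mostow rigidity realizes $\zeta_*$ and $j_*$ by isometries of $Y$. Their fixed sets are freely homotopic to $\Sigma$ and to the hypersurfaces $F_i$. These are already totally geodesic in $Y$, so they coincide with them.
\item Since $\zeta^\#$ rotates the normal bundle of $\Sigma$ by a constant angle, consecutive $F_i$ must meet at equal angles. By construction, however, some consecutive pairs meet at angle $2\pi/d_1$ and others at $2\pi/d_2$, a contradiction.
\item Odd degrees are excluded analogously, by gluing $d$ copies of the piece $N_d$ (angle $\pi/d$) to one copy of $N$ (angle $\pi$).
\end{itemize}
Nothing in your outline plays this role. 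The special structure ($H$ and $\iota$) is used precisely to make this gluing possible.

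\textbf{Part (2).} There is a second gap here. Your atoroidality argument for $M\setminus\Sigma$ does not work. An essential torus in $M\setminus\Sigma$ will in general compress in $M$ through $\Sigma$. Examples are $\partial V$ for a solid torus $V\subset M$ in which $\Sigma\cap V$ is a nontrivial winding-number-one pattern, or a torus in a ball through which $\Sigma$ passes as a nontrivial tangle. So the absence of $\mathbb{Z}^2$ in $\pi_1(M)$ gives no contradiction, and nontriviality of the geodesics only excludes the crudest configurations. Excluding the remaining ones is the actual geometric content of the claim.

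The paper avoids this by using $M_d$. You already show, as the paper does, that $M_d$ is hyperbolic. The deck group is then realized by isometries (Mostow plus control of the fixed set; in dimension three also the orbifold theorem). The quotient is a hyperbolic cone structure on $M$ with singular locus $\Sigma$ and cone angle $2\pi/d\le\pi$. Kojima's theorem deforms it to a complete finite volume hyperbolic metric on $M\setminus\Sigma$.

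That same quotient directly exhibits $M$ as $\mathbb{H}^3/\Gamma$ with $\Gamma$ a lattice with torsion. Your Dehn filling argument is a valid alternative for this last point, but only once hyperbolicity of $M\setminus\Sigma$ has been established.
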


The organization of this article is as follows.
Section \ref{sec:branched} contains the proof of the first part of Theorem \ref{mainthm}. 
In Section \ref{sec:cone} 
we give a short  introduction to hyperbolic cone metrics and their relation 
to Theorem \ref{mainthm}. This is used in Section \ref{sec:branched3} to study branched covers
of hyperbolic 3-manifolds and review some results from \cite{HJ22}. 
The proof of Theorem \ref{mainthm} is contained in 
Section \ref{sec:branched4}. 

\bigskip

\noindent
{\bf Acknowledgement:} This article summarizes and  slightly 
extends results obtained within the framework 
of the Schwerpunktprogramm \emph{Geometry
at infinity}. I am grateful to Stephan Stadler for pointing the 
reference \cite{FS90} out to me.

\section{Branched covering}\label{sec:branched}

In this section we consider a
smooth closed oriented manifold $M$ of dimension $n\geq 3$ and 
a smooth closed oriented embedded
submanifold $\Sigma\subset M$ of codimension two.

\begin{defn}\label{cyclic} 
A \emph{cyclic $d$-fold covering of $M$ branched along $\Sigma$} 
is a manifold $M_d$ which admits a degree $d$ map $\Pi:M_d\to M$ with the following properties.
\begin{enumerate}
\item The restriction of $\Pi$ to $\Pi^{-1}(M\setminus \Sigma)$ is a degree $d$ regular covering onto 
$M\setminus \Sigma$, with deck group $\mathbb{Z}/d\mathbb{Z}$. 
\item $\Pi \vert \Pi^{-1}(\Sigma)$ is a homeomorphism. 
\end{enumerate}
\end{defn}

We refer to \cite{F57} for more information on more general branched coverings of simplicial complexes.

Let $\nu\to \Sigma$ be the normal bundle of $\Sigma$, which is an oriented two-dimensional 
vector bundle over $\Sigma$. 
The orientation
of $\nu$ can be used to equip $\nu$ with the structure of a complex line bundle. 
The complex line bundle $\nu\to \Sigma$  is defined by a classifying map 
$\Sigma\to \mathbb{C}P^\infty$, so that $\nu$ is the pull-back of the
tautological bundle $\tau\to \mathbb{C}P^\infty$ under this map.
The Euler class $e(\nu)$ of $\nu$ then is the pull-back of the generator $e(\tau)$ of 
$H^2(\mathbb{C}P^\infty;\mathbb{Z})$ under the classifying map. 

Since the odd dimensional homology of $\mathbb{C}P^\infty$ vanishes, the universal coefficient
theorem shows that any class $a\in H^2(\mathbb{C}P^\infty;\mathbb{Z})$ can be identified with a 
homomorphism $H_2(\mathbb{C}P^\infty;\mathbb{Z})\to \mathbb{Z}$. For every $d\geq 2$, 
the mod $d$ reduction $e(\tau)_d$ 
of $e(\tau)$ is the class in $H^2(\mathbb{C}P^\infty;\mathbb{Z}/d\mathbb{Z})$ which 
arises from composing the homomorphism defining $e(\tau)$ with the quotient map 
$\mathbb{Z}\to \mathbb{Z}/d\mathbb{Z}$. The mod $d$ reduction of the Euler class of $\nu$ then is the
pull-back $e(\nu)_d \in H^2(\Sigma; \mathbb{Z}/d\mathbb{Z})$ 
of $e(\tau)_d$ 
by the classifying map.

For every integer $d\geq 2$, the mod $d$ fundamental class of 
$\Sigma$ is the generator of the group 
$H_{n-2}(\Sigma;\mathbb{Z}/d\mathbb{Z})$ determined by the choice of an orientation.
Let $[\Sigma]_d$ be its image under the map $H_{n-2}(\Sigma;\mathbb{Z}/d\mathbb{Z})\to 
H_{n-2}(M;\mathbb{Z}/d\mathbb{Z})$
induced by the inclusion $\Sigma\to M$.

A cyclic covering of $M$ branched 
along $\Sigma$ restricts to a cyclic branched covering
$\Pi\vert \Pi^{-1}(U)\to U$ 
of a tubular neighborhood $U$ of $\Sigma$ in $M$, and $U$ can be chosen to be diffeomorphic
to the total space of the normal bundle $\nu$. With this identification and up to homotopy,
this cyclic covering restricts
to a cyclic unbranched covering of the complement of zero
in each fiber of $\nu$. 

\begin{lem}\label{euler}
There is a $d$-fold cyclic covering of $U$ branched along $\Sigma$ if and only if 
 the mod $d$ reduction of 
the Euler class $e(\nu)_d\in H^2(\Sigma,\mathbb{Z}/d\mathbb{Z})$ vanishes. 
\end{lem}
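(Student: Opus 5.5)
The plan is to reduce the question to the unbranched covering of the sphere bundle, and then read the answer off the Gysin (Serre) sequence. Let $S\subset U$ be the unit circle bundle of $\nu$, so that $U\setminus\Sigma$ deformation retracts onto $S$. A $d$-fold cyclic covering of $U$ branched along $\Sigma$ restricts to a regular $\mathbb{Z}/d\mathbb{Z}$-covering of $U\setminus \Sigma\simeq S$. By condition (2) of Definition \ref{cyclic}, this covering restricts on each fibre circle to the connected $d$-fold cover $z\mapsto z^d$. Indeed, near a point of $\Sigma$ the preimage is a single point, so the local monodromy around $\Sigma$ must generate $\mathbb{Z}/d\mathbb{Z}$.

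Conversely, suppose $S$ has a regular $\mathbb{Z}/d\mathbb{Z}$-covering $\hat S\to S$ that restricts on each fibre to the connected $d$-fold cover. Then $\hat S\to\Sigma$ is again a circle bundle, and $\hat S\to S$ is fibrewise $z\mapsto z^d$. Coning off each fibre, that is, taking the mapping cylinder of $\hat S\to \Sigma$ (equivalently the disk bundle of $\hat S$), produces a manifold $\hat U$ with a map to $U$ branched exactly along $\Sigma$. The map is a homeomorphism over $\Sigma$ and is a cyclic covering off $\Sigma$. This gives the equivalence: branched coverings of $U$ correspond to homomorphisms $\phi:\pi_1(S)\to\mathbb{Z}/d\mathbb{Z}$ (equivalently classes $\phi\in H^1(S;\mathbb{Z}/d\mathbb{Z})$) whose restriction to the fibre class is a generator. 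Composing $\phi$ with an automorphism of $\mathbb{Z}/d\mathbb{Z}$, we may assume it sends the fibre to $1$.

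Next I use the Gysin sequence of the oriented circle bundle $p:S\to\Sigma$ with $\mathbb{Z}/d\mathbb{Z}$ coefficients:
\[
0\to H^1(\Sigma;\mathbb{Z}/d\mathbb{Z})\xrightarrow{p^*} H^1(S;\mathbb{Z}/d\mathbb{Z})\xrightarrow{p_*} H^0(\Sigma;\mathbb{Z}/d\mathbb{Z})\xrightarrow{\cup e(\nu)_d} H^2(\Sigma;\mathbb{Z}/d\mathbb{Z}).
\]
Here $p_*$ is integration along the fibre, i.e. evaluation of a class on the fibre circle. A class $\phi$ with $\phi(\text{fibre})=1$ exists if and only if $1\in H^0(\Sigma;\mathbb{Z}/d\mathbb{Z})=\mathbb{Z}/d\mathbb{Z}$ lies in the image of $p_*$. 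By exactness this holds if and only if $1\cup e(\nu)_d=e(\nu)_d=0$. Should $\Sigma$ be disconnected, the argument is applied componentwise.

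I need to check that the Euler class appearing in the mod $d$ Gysin sequence is the mod $d$ reduction of the integral Euler class, as defined via the classifying map. This follows from naturality of the Gysin sequence under the coefficient map $\mathbb{Z}\to\mathbb{Z}/d\mathbb{Z}$ and from the universal case $\tau\to\mathbb{C}P^\infty$.

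Alternatively, without the Gysin sequence: the extension $0\to\mathbb{Z}\to\pi_1(S)\to\pi_1(\Sigma)\to 1$ is central, and a homomorphism $\pi_1(S)\to \mathbb{Z}/d$ that is $1$ on the fibre exists exactly when the pushed-out extension by $\mathbb{Z}\to\mathbb{Z}/d$ splits. This is the vanishing of the reduced extension class, which is $e(\nu)_d$ restricted to $\pi_1$. The group-cohomology version only sees $H^2(\pi_1\Sigma)$, though, so I prefer the Gysin approach.

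The main obstacle I expect is the first step: justifying carefully that an arbitrary branched covering in the sense of Definition \ref{cyclic} has fibrewise monodromy a generator, and that the coned-off space $\hat U$ is a manifold. The second point is clear since $\hat U$ is the disk bundle of $\hat S\to\Sigma$. The first I would argue from the local structure near a point of $\Sigma$: the preimage of a small punctured disk transverse to $\Sigma$ is a $\mathbb{Z}/d\mathbb{Z}$-cover. It is connected because $\Pi^{-1}$ of the whole disk is a neighbourhood of a single point and is connected by continuity of the homeomorphism on $\Pi^{-1}(\Sigma)$.
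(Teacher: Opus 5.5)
Your main argument is the paper's. The paper uses the exact sequence of the pair $(\nu,\nu_0)$: a class in $H^1(\nu_0;\mathbb{Z}/d\mathbb{Z})$ that is a generator on the fibre is sent by the coboundary to (a unit multiple of) $\tau(\nu)_d$, and $\tau(\nu)_d$ maps to $e(\nu)_d$. Your Gysin sequence is that same sequence rewritten via the Thom isomorphism, with $p_*$ in the role of the coboundary. Your disk-bundle construction for the converse also makes explicit a step the paper leaves implicit.

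One justification in your last paragraph does not work as written. Connectedness of $\Pi^{-1}(D)$ tells you nothing about $\Pi^{-1}(D\setminus\{x\})$. If the fibre loop generated a subgroup of index $k\geq 2$, then $\Pi^{-1}(D)$ would be $k$ cones joined at the single point $\hat x=\Pi^{-1}(x)$, which is still connected. Also, $\Pi^{-1}(D)$ is not a neighbourhood of $\hat x$ in $M_d$.

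What rules out $k\geq 2$ is that $M_d$ is an $n$-manifold. Argue as follows:
\begin{enumerate}
\item Take a product neighbourhood $V\cong B^{n-2}\times D$ of $x$.
\item Every component of $\Pi^{-1}(V\setminus\Sigma)$ is open and covers $V\setminus\Sigma$, so it surjects onto it. By properness of $\Pi$ and injectivity over $\Sigma$, each component therefore has $\hat x$ in its closure.
\item The set $\Pi^{-1}(\Sigma)\cong\Sigma$ is closed of covering dimension $n-2$. Such a set cannot separate a connected open subset of an $n$-manifold (Hurewicz--Wallman). So for a small connected open neighbourhood $W\subset\Pi^{-1}(V)$ of $\hat x$, the set $W\setminus\Pi^{-1}(\Sigma)$ is connected.
\item This connected set meets every component, so there is only one component, i.e.\ the fibre monodromy generates $\mathbb{Z}/d\mathbb{Z}$.
\end{enumerate}
The paper itself does not prove this point; it takes fibrewise connectedness for granted in the paragraph preceding the lemma.
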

\begin{proof} Let $\nu_0\subset \nu$ be the complement of the zero section of $\nu$.
  Let $R$ be a ring with unit and 
  consider the exact cohomology sequence of the pair $(\nu,\nu_0)$ given by 
  \begin{equation}\label{exact}
    \cdots \to H^1(\nu_0;R)\to H^2(\nu,\nu_0;R)\to H^2(\nu;R)\to
  H^2(\nu_0;R)\to \cdots\end{equation} 
  
  For $x\in \Sigma$ let $\nu_x$ be the oriented fiber of $\nu$ over $x$.
  The \emph{Thom class} $\tau(\nu)$ of $\nu$ is a distinguished element of
  $H^2(\nu,\nu_0;\mathbb{Z})$ which for each $x\in \Sigma$ restricts to the preferred generator of
  $H^2(\nu_x,\nu_x\setminus 0;\mathbb{Z})$
  (Theorem 10.4 of \cite{MS74}). Using the Gysin sequence Theorem 12.2 of \cite{MS74} or 
  Proposition 6.41 of \cite{BT82}, the image of $\tau(\nu)$ under the restriction map
  $H^2(\nu;\nu_0;\mathbb{Z})\to H^2(\nu;\mathbb{Z})$
  is the 
  Euler class $e(\nu)$ of $\nu$ via the homotopy equivalence of the
  total space of $\nu$ with $\Sigma$.

  Now the fundamental group of a regular $d$-fold
  cyclic covering of $\nu_0$ restricting to a $d$-fold covering of a fiber is
  the kernel of a homomorphism $\pi_1(\nu_0)\to \mathbb{Z}/d\mathbb{Z}$ which restricts to
  a surjective homomorphism of the fundamental group $\mathbb{Z}$  of the fiber of
  $\nu_0$ onto $\mathbb{Z}/d\mathbb{Z}$.
  Such a homomorphism, viewed as a nontrivial element in $H^1(\nu_0;\mathbb{Z}/d\mathbb{Z})$, maps to the 
  mod $d$ reduction $\tau_d(\nu)\in H^2(\nu,\nu_0; \mathbb{Z}/d\mathbb{Z})$ of the Thom class 
  in the exact cohomology sequence (\ref{exact}) for $R=\mathbb{Z}/d\mathbb{Z}$. 
    By exactness, 
  such a homomorphism exists 
  if and only if the mod $d$ reduction of the Euler
  class $e_d(\nu)$ of $\nu$ vanishes.
\end{proof}

For a ring $R$ with unit consider now the exact sequence
\[\cdots \to H^1(M\setminus \Sigma;R)\to H^2(M,M\setminus \Sigma;R)\to H^2(M,R)\to
  H^2(M\setminus \Sigma;R)\to \cdots\]
By excision, the Thom class of the normal bundle $\nu$ defines a nontrivial class
in $H^2(M,M\setminus \Sigma;R)=H^2(U,U\setminus \Sigma;R)=
H^2(\nu,\nu_0;R)$. 
By Proposition 6.24 of \cite{BT82},
its image under the map
$H^2(M,M\setminus \Sigma;R)\to H^2(M;R)$ is just the Poincar\'e dual of $\Sigma$ 
 with coefficients in $R$. 

Taking  $R=\mathbb{Z}/d\mathbb{Z}$, this
class vanishes if and only if the mod $d$ reduction $\tau(\nu)_d$ of the Thom class of $\nu$ is contained in the
image of $H^1(M\setminus \Sigma;\mathbb{Z}/d\mathbb{Z})$. Then
the kernel of the homomorphism $\pi_1(M\setminus \Sigma)\to \mathbb{Z}/d\mathbb{Z}$ defined 
by an element of $H^1(M\setminus \Sigma;\mathbb{Z}/d\mathbb{Z})$ which maps to $\tau(\nu)_d$ 
is an unbranched degree $d$ 
covering of $M\setminus \Sigma$.  
By excision and Lemma \ref{euler}, this covering extends to 
a cyclic
covering of $M$ branched along $\Sigma$. 
Together with Poincar\'e duality, this shows 
the first part of Theorem \ref{mainthm}.

\begin{prop}\label{prop:branched}
There exists a cyclic $d$-fold covering of $M$ branched along $\Sigma$ if 
$[\Sigma]_d=0\in H_{n-2}(M,\mathbb{Z}/d\mathbb{Z})$. 
\end{prop}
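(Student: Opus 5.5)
The plan is to turn the vanishing of $[\Sigma]_d$ into a homomorphism $\pi_1(M\setminus\Sigma)\to\mathbb{Z}/d\mathbb{Z}$ that is surjective on the meridian loop. Such a homomorphism defines a $d$-fold cyclic covering of $M\setminus\Sigma$, and Lemma \ref{euler} will be used to extend it over $\Sigma$.

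\textbf{Step 1: translate the hypothesis.} Work with coefficients $R=\mathbb{Z}/d\mathbb{Z}$ and use Poincar\'e duality with these coefficients; this needs only that $M$ is closed and oriented. The hypothesis $[\Sigma]_d=0$ then says that the Poincar\'e dual class of $\Sigma$ in $H^2(M;\mathbb{Z}/d\mathbb{Z})$ vanishes. By the discussion preceding the proposition (Proposition 6.24 of \cite{BT82} together with excision), this dual class is the image of the mod $d$ Thom class $\tau(\nu)_d\in H^2(M,M\setminus\Sigma;\mathbb{Z}/d\mathbb{Z})$. Exactness of the long exact sequence of the pair $(M,M\setminus\Sigma)$ therefore gives a class $\alpha\in H^1(M\setminus\Sigma;\mathbb{Z}/d\mathbb{Z})$ with $\delta\alpha=\tau(\nu)_d$.

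\textbf{Step 2: build the covering of the complement.} Regard $\alpha$ as a homomorphism $\pi_1(M\setminus\Sigma)\to\mathbb{Z}/d\mathbb{Z}$. To check that it sends a meridian to a generator, restrict to the pair $(D_x,D_x\setminus 0)$ for a normal disc $D_x$ at $x\in\Sigma$. By naturality of $\delta$, the restriction of $\alpha$ to the punctured disc maps to the restriction of $\tau(\nu)_d$. By definition of the Thom class, that restriction is the generator of $H^2(D_x,D_x\setminus 0;\mathbb{Z}/d\mathbb{Z})\cong\mathbb{Z}/d\mathbb{Z}$. Since $\delta:H^1(D_x\setminus 0)\to H^2(D_x,D_x\setminus 0)$ is an isomorphism, $\alpha$ evaluates to a generator on the meridian. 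In particular $\alpha$ is surjective, so its kernel defines a connected regular $d$-fold covering $N\to M\setminus\Sigma$ with deck group $\mathbb{Z}/d\mathbb{Z}$. Over each punctured normal disc, this covering restricts to the connected $d$-fold covering.

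\textbf{Step 3: extend over $\Sigma$.} Restrict $N$ to $U\setminus\Sigma\simeq\nu_0$. This is exactly the kind of covering in the proof of Lemma \ref{euler}: a cyclic covering restricting to the $d$-fold covering on each fiber. Fiberwise it is the map $z\mapsto z^d$ on punctured discs, so it extends over $\Sigma$ by filling in each fiber with a single point, which produces the branched covering of $U$. This is consistent with the lemma, since $e(\nu)_d$ is the restriction of the Poincar\'e dual of $\Sigma$ to $\Sigma$ and hence vanishes.

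\textbf{Expected obstacle.} The main care lies in this last step: making the fiberwise filling smooth and global. I would do this by identifying the restricted covering with the bundle of $d$-th roots, that is, the circle bundle of a line bundle $L$ with $L^{\otimes d}\cong\nu$; such an $L$ exists because $e(\nu)_d=0$. The branched cover of $U$ is then the total space of $L$ mapping to $\nu$ by $v\mapsto v^{\otimes d}$. Gluing this to $N$ along $U\setminus\Sigma$ gives the manifold $M_d$ and the map $\Pi$, which satisfies both conditions of Definition \ref{cyclic}.
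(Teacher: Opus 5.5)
Your proposal is correct and is essentially the paper's argument: Poincar\'e duality mod $d$, the mod $d$ Thom class lifted through the exact sequence of the pair $(M,M\setminus\Sigma)$ to a class in $H^1(M\setminus\Sigma;\mathbb{Z}/d\mathbb{Z})$, and extension over $\Sigma$ via the local picture of Lemma \ref{euler}; your meridian check and fiberwise $z\mapsto z^d$ filling spell out what the paper leaves implicit. The one point to tighten is that the line bundle $L$ in your last step must be built from the given covering of $U\setminus\Sigma$, or chosen to match it (for instance by lifting the fiberwise circle action to a free $\mathbb{R}/d\mathbb{Z}$-action), because an arbitrary $d$-th root of $\nu$ may give a covering of $\nu_0$ that differs from yours by a class pulled back from $H^1(\Sigma;\mathbb{Z}/d\mathbb{Z})$ and then would not glue to $N$.
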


The following example shows that the condition in Proposition \ref{prop:branched} for the existence
of a branched covering is not necessary. 

\begin{example}
Let $S_1,S_2$ be two closed oriented surfaces of genus $g\geq 2$. Then for any $p\in S_2$, 
$\Sigma=S_1\times \{p\}$ is a codimension 2 oriented embedded submanifold of $S_1\times S_2$ with
$[\Sigma]_d\not=0$ for any $d\geq 1$. On the other hand, coverings of $S_1\times S_2$ branched along
$\Sigma$ correspond precisely to coverings of $S_2$ branched at $p$, and these exist
for any odd degree $d\geq 3$ \cite{PP06}. 
\end{example}

\section{Branched coverings and hyperbolic cone metrics}\label{sec:cone}

Let $\mathbb{H}^n$ be the hyperbolic space and let $\mathbb{H}^{n-2}$ be a 
totally geodesic subspace of codimension two. The subgroup of the group
${\rm PO}(n,1)$ of orientation preserving isometries of 
$\mathbb{H}^n$ which fixes $\mathbb{H}^{n-2}$ pointwise is the circle 
group $S^1$ acting on a fiber of the normal bundle as a group of rotations. 
The fundamental group of $\mathbb{H}^n\setminus \mathbb{H}^{n-2}$ equals the group 
$\mathbb{Z}$. 

If we denote by $\tilde X$ the universal covering of $\mathbb{H}^n\setminus \mathbb{H}^{n-2}$,
then the abelian group $\mathbb{R}$, which is the universal covering of the circle $S^1$, 
acts freely and isometrically on $\tilde X$ with respect to the
(incomplete) hyperbolic pull-back metric. Thus we can take the quotient $X$ of $\tilde X$ under 
an infinite cyclic  subgroup of $\mathbb{R}$. The metric completion $\bar X$ of $X$ 
contains the isometrically embedded subspace $\mathbb{H}^{n-2}$, and a fiber of the normal bundle 
has a natural identification with a two-dimensional
hyperbolic cone with angle $\alpha\in (0,\infty)$.  We call $\mathbb{H}^{n-2}$ the \emph{singular set}
of the hyperbolic cone metric. 

If $\alpha\in (0,2\pi]$
then a fiber cone   
is obtained as follows. In the disk model $D=\{z\mid \vert z\vert <1\}$ for the hyperbolic plane, straight line segments starting 
at the origin are geodesics up to parameterization. Cut $D$ open along the segments
$\rho=\{\Im =0, \Re \geq 0\}$ and 
$e^{i\alpha}\rho$. 
One component of the resulting space is a sector of angle $\alpha$ 
with geodesic boundary. 
Glue this component 
along
the boundary with the rotation $e^{i\alpha}$. The hyperbolic metric on $D$ descends to a hyperbolic cone metric
with cone angle $\alpha$ at the image of the vertex $0$ of the sector. 

We call a metric on a closed 
manifold which is hyperbolic outside of a closed codimension two submanifold $\Sigma$ 
and such that any point $x\in \Sigma$ has a neighborhood which is isometric 
to a neighborhood of a point in the singular set of a hyperbolic cone metric
as described above
a \emph{hyperbolic cone metric} as well. 
The cone angle is locally constant.  

Assume now that $M$ is a 
closed oriented hyperbolic manifold 
containing a closed totally geodesic oriented submanifold $\Sigma$ of codimension two 
which is homologous to zero. 
By Proposition \ref{prop:branched}, 
for each $d\geq 2$ we then can construct a $d$-fold cyclic cover 
$\Pi:M_d\to  M$ branched along 
$\Sigma$. The pull-back under $\Pi$ of the hyperbolic metric on $M$ is a hyperbolic cone metric
with cone angle $2d\pi$ and singular set the preimage of $\Sigma$ under the natural covering projection.
This observation is summarized as follows.

\begin{lem}\label{conemetric}
The $d$-fold cyclic covering $M_d$ of $M$ branched along $\Sigma$ admits a hyperbolic cone metric with 
conical singularity along $\Sigma$ and cone angle $2\pi d$.
\end{lem}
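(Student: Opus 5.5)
The plan is to pull back the hyperbolic metric of $M$ along $\Pi$ and check the local model near the branch locus against the cone metric described above. Let $g$ denote the hyperbolic metric on $M$ and set $\tilde g=\Pi^*g$. Away from $\Pi^{-1}(\Sigma)$, the map $\Pi$ is a local diffeomorphism, since by Definition \ref{cyclic} it restricts to an unbranched covering of $M\setminus\Sigma$. Hence $\tilde g$ is a hyperbolic Riemannian metric on $M_d\setminus\Pi^{-1}(\Sigma)$. By Definition \ref{cyclic}, $\Pi$ restricted to $\Pi^{-1}(\Sigma)$ is a homeomorphism, so we identify $\Pi^{-1}(\Sigma)$ with $\Sigma$. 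It then remains only to identify a neighborhood of each point of $\Sigma$ in $M_d$ with the model $\bar X$ of cone angle $2\pi d$.

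First I would work locally in $M$. Fix $x\in\Sigma$ and a small ball $B$ about $x$ in $M$ that is isometric to a ball in $\mathbb{H}^n$. Since $\Sigma$ is totally geodesic, this isometry can be chosen so that $B\cap\Sigma$ is carried to $B'\cap\mathbb{H}^{n-2}$, where $B'$ is the corresponding ball in $\mathbb{H}^n$. The set $B\setminus\Sigma$ is homotopy equivalent to a circle, namely a fiber loop of the normal bundle, so $\pi_1(B\setminus\Sigma)=\mathbb{Z}$. Next I would identify the covering over this ball. Over $B\setminus\Sigma$ the covering $\Pi$ restricts to a cyclic covering. By the discussion before Lemma \ref{euler}, the homomorphism $\pi_1(M\setminus\Sigma)\to\mathbb{Z}/d\mathbb{Z}$ defining $M_d$ maps a fiber loop to a generator. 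Therefore $\Pi^{-1}(B\setminus\Sigma)$ is connected and is the connected $d$-fold covering of $B\setminus\Sigma$.

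I would then match this covering with the model. The universal cover $\tilde X$ of $B'\setminus\mathbb{H}^{n-2}$ carries the free isometric action of $\mathbb{R}$, which lifts the rotations about $\mathbb{H}^{n-2}$. The $d$-fold connected cover of $B'\setminus\mathbb{H}^{n-2}$ is the quotient of $\tilde X$ by the subgroup $2\pi d\,\mathbb{Z}\subset\mathbb{R}$. By the construction described above, its metric completion is exactly a neighborhood of the singular set in the hyperbolic cone metric with angle $2\pi d$. Since $\Pi$ is a local isometry off $\Sigma$, this gives an isometry from $\Pi^{-1}(B\setminus\Sigma)$ with $\tilde g$ onto this model. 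The isometry extends continuously over $\Pi^{-1}(B\cap\Sigma)$, because both sides are the completions along the codimension two set.

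The main obstacle is the compatibility of this metric completion with the given topology of $M_d$. One has to check that the metric completion of $\Pi^{-1}(B\setminus\Sigma)$ agrees with the actual neighborhood $\Pi^{-1}(B)$ in $M_d$. I would argue this with the fiberwise polar coordinates $(r,\theta)$ on the normal slices. In these coordinates, $\Pi$ is modeled on the map $(r,\theta)\mapsto(r,d\theta)$, and $\Pi^{-1}(\Sigma)$ corresponds to $r=0$ in both pictures, so the identification follows. Since $x\in\Sigma$ was arbitrary, every point of $\Sigma$ has the required model neighborhood, and the cone angle is $2\pi d$ everywhere along $\Sigma$.
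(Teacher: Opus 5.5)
Your proposal is correct and follows the same route as the paper. The paper simply observes that the pull-back $\Pi^*g$ of the hyperbolic metric is hyperbolic off $\Pi^{-1}(\Sigma)$ and has cone angle $2\pi d$ along it; your local analysis makes explicit what it leaves implicit, namely that the fiber loop maps to a generator of $\mathbb{Z}/d\mathbb{Z}$, so $\Pi^{-1}(B\setminus\Sigma)$ is the quotient of $\tilde X$ by $2\pi d\,\mathbb{Z}$, and that the completion of this quotient matches the topology of $M_d$ near $\Sigma$.
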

 
For a hyperbolic cone metric, the volume form is defined and gives rise to a volume by integration.
The following is immediate from the above construction.

\begin{cor}\label{cor:cone}
The volume of $M_d$ with respect to the hyperbolic cone metric equals $d{\rm vol}(M)$. 
\end{cor}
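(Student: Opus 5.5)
The plan is to compute the volume locally and then sum. The volume of a hyperbolic cone metric is the integral of its Riemannian volume form over the regular part, here $M_d\setminus \Pi^{-1}(\Sigma)$. The singular set $\Pi^{-1}(\Sigma)$ is a closed codimension two submanifold, so it has measure zero and contributes nothing.

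\emph{Step 1 (the regular part).} By Definition \ref{cyclic}, the restriction of $\Pi$ to $M_d\setminus\Pi^{-1}(\Sigma)$ is a $d$-sheeted unbranched covering of $M\setminus\Sigma$. By Lemma \ref{conemetric}, the cone metric on this set is the pull-back of the hyperbolic metric of $M$. Hence $\Pi$ is a local isometry there.

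\emph{Step 2 (local volume count).} Choose a countable partition of $M\setminus\Sigma$, up to measure zero, into Borel sets $B_i$ that are evenly covered. For instance, take small geodesic simplices of a triangulation of $M$ and discard the measure-zero set $\Sigma$. Over each $B_i$, the preimage $\Pi^{-1}(B_i)$ is a disjoint union of $d$ sets. Each of these is mapped isometrically onto $B_i$, so it has volume ${\rm vol}(B_i)$. Summing over $i$ gives
\[{\rm vol}(M_d)={\rm vol}(M_d\setminus \Pi^{-1}(\Sigma))=d\,{\rm vol}(M\setminus\Sigma)=d\,{\rm vol}(M).\]

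\emph{Expected difficulty.} Essentially none. The only point needing care is to justify that the singular set has zero measure for the cone volume. This holds because, near a point of $\Sigma$, the cone metric in the coordinates of Section \ref{sec:cone} has a volume form that is a bounded multiple of Lebesgue measure on $\mathbb{H}^{n-2}\times(\text{cone})$. Concretely, in polar coordinates the form is $\sinh r\,dr\,d\theta$ times the volume of the base, with $\theta\in[0,2\pi d)$, so a tubular neighborhood of radius $\epsilon$ has volume $O(\epsilon^2)$, which tends to $0$.
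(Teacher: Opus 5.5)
Your argument is correct and is essentially the paper's proof. Off the codimension-two singular set, $\Pi$ is a $d$-sheeted local isometry, and the paper expresses your sheet count as the degree formula $\int_{M_d\setminus\Pi^{-1}(\Sigma)}\Pi^*\omega=d\int_M\omega$ for the hyperbolic volume form $\omega$ of $M$.

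One small slip in your example: a simplex that meets $\Sigma$ is not evenly covered even after deleting $\Sigma$, because a meridian loop around $\Sigma$ has nontrivial monodromy. To fix it, use a triangulation with $\Sigma$ as a subcomplex, or take a countable cover of $M\setminus\Sigma$ by small balls avoiding $\Sigma$ and make it disjoint.
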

\begin{proof}
As the branched covering projection $\Pi:M_d\to M$ is a map of degree $d$ which is furthermore 
an orientation preserving local diffeomorphism on $M_d\setminus \Sigma$, the volume form of the 
cone metric equals the pull-back of the volume form of $M$ outside of a submanifold of codimension two, 
and this form integrates to 
$d{\rm vol}(M)$.
\end{proof}

We next discuss another viewpoint of the hyperbolic cone metric $g_d$ on $M_d$. 
Namely, consider again the hyperbolic space $\mathbb{H}^n$ and a totally geodesic 
subspace $\mathbb{H}^{n-2}$ of codimension two.
The stabilizer 
${\rm Stab}(\mathbb{H}^{n-2})$ of $\mathbb{H}^n$ in 
${\rm PO}(n,1)$ which consists of isometries whose restrictions to
$\mathbb{H}^{n-2}$ are orientation preserving can be
  identified with
  $S^1\times {\rm PO}(n-2,1)$
where $S^1$ is the subgroup fixing $\mathbb{H}^{n-2}$ pointwise.

The hyperbolic cone metric on $M_d$ pulls back to a hyperbolic cone metric on the covering 
$\hat M_d$ of 
$M_d$ with fundamental group $\pi_1(\Sigma)$. The singular locus of this metric is the 
submanifold $\Sigma$. The cyclic deck group $\Gamma$ of $M_d$ acts on 
the cone manifold $\hat M_d$ as a cyclic group 
of isometries fixing $\Sigma$ pointwise. If $h$ is a smooth Riemannian metric on 
$M_d$ which is invariant under the action of $\Gamma$, then $h$ lifts to a smooth metric $\hat h$ 
on $\hat M_d$ 
which is invariant under $\Gamma$. Consequently $h$ descends to a cone metric on 
$\Gamma\backslash \hat M_d=\hat M$ with singular locus $\Sigma$ and cone angle $\frac{2\pi}{d}$.
This manifold in turn is just the quotient of $\mathbb{H}^n$ under the action of the fundamental group of 
$\Sigma$.

Thus if one is interested in understanding distinguished metrics on $M_d$ one is led to studying 
distinguished cone metrics on $\mathbb{H}^n$ with singular locus $\mathbb{H}^{n-2}$ and cone angle 
$\frac{2\pi}{d}$ which are invariant under the action of the group 
$S^1\times {\rm PO}(n-2,1)$. 
Particular such metrics 
are invariant Einstein 
cone metrics on $\mathbb{H}^{n}$ with singular locus $\mathbb{H}^{n-2}$ 
and cone angle $\frac{2\pi}{d}$. 
In order to be the lift of a metric on $M_d$, it is necessary that the metric 
glues to a metric on $M_d\setminus \Sigma$. This motivates the study of such Einstein cone metrics 
on $\mathbb{H}^n$ 
which are asymptotic to the hyperbolic metric as the distance from $\mathbb{H}^{n-2}$ tends to infinity. 
In particular, the Einstein constant has to be negative and hence by rescaling, one may assume that
the Einstein constant equals $-(n-1)$.

For three-dimensional manifolds, the Einstein condition is equivalent to constant curvature \cite{Bes08}, but this
is not true any more in higher dimension. 
The following is due to Fine-Premoselli (Proposition 3.2 of \cite{FP20}). 
An analogous statement in the K\"ahler setting 
was established in \cite{GH25}.

\begin{prop}[Proposition 3.2 and Lemma 3.3 of \cite{FP20}]\label{prop:cone}
Let us con\-si\-der a to\-tally geodesic subspace  $\mathbb{H}^{n-2}\subset \mathbb{H}^n$ 
and let $\alpha \in (0,\infty)$. Then there exists 
a $S^1\times {\rm PO}(n-2,1)$-invariant  
Einstein cone metric $g_\alpha$ on $\mathbb{H}^n$ with Einstein constant $-(n-1)$ and 
the following properties.
\begin{enumerate}
\item The singular locus of $g_\alpha$ equals $\mathbb{H}^{n-2}$, 
and $\mathbb{H}^{n-2}$ is totally geodesic for $g_\alpha$. 
\item The cone angle of $g_\alpha$ along $\mathbb{H}^{n-2}$ equals $\alpha$.  
\item The sectional curvature of $g_\alpha$ is negative. 
\item As $d(\mathbb{H}^{n-2},x)\to \infty$, the metric $g_\alpha$ converges smoothly to the hyperbolic metric $g$.
\end{enumerate}
We have $\alpha < 2\pi$ if and only if the restriction of $g_\alpha$ to $\mathbb{H}^{n-2}$ satisfies 
$g_\alpha\vert \mathbb{H}^{n-2}< g\vert \mathbb{H}^{n-2}$. In particular, 
for $\alpha\not=2\pi$, the metric $g_\alpha$ is a hyperbolic cone metric if and only if $n=3$. 
\end{prop}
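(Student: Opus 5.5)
We reduce the Einstein equation for an $S^1\times{\rm PO}(n-2,1)$-invariant metric to an ODE in the distance $r$ from $\mathbb{H}^{n-2}$. Write the hyperbolic metric in Fermi-type coordinates as
\[ g = dr^2 + \sinh^2(r)\,d\theta^2 + \cosh^2(r)\,g_{\mathbb{H}^{n-2}}, \]
where $\theta\in[0,2\pi)$ is the angle in the normal fibres. By invariance, any metric of the required type has the form
\[ g_\alpha = dt^2 + a(t)^2 d\theta^2 + b(t)^2 g_{\mathbb{H}^{n-2}}, \]
with $t$ the $g_\alpha$-distance to the singular locus. This is a doubly warped product over the interval $(0,\infty)$ with fibres $S^1$ and $\mathbb{H}^{n-2}$. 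The Ricci tensor of such a metric is standard: the $dt^2$-, $d\theta^2$- and $\mathbb{H}^{n-2}$-components are expressed through $a''/a$, $b''/b$, $a'b'/(ab)$, $(b'/b)^2$ and $-(n-3)/b^2$. Setting ${\rm Ric}=-(n-1)g_\alpha$ therefore gives a second order system for $(a,b)$, together with one first order constraint (the Hamiltonian constraint coming from the $tt$-equation and the Bianchi identity).

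The boundary conditions at $t=0$ encode the cone structure. We require $b(0)=b_0>0$ and $b'(0)=0$, which makes $\mathbb{H}^{n-2}$ totally geodesic. We also require $a(0)=0$ and $a'(0)=\alpha/2\pi$, so that the normal fibre is a cone of angle $\alpha$. The plan is to treat $b_0$ as a free parameter and solve the singular initial value problem at $t=0$; a regular-singular-point argument or a power series in $t$ gives existence and uniqueness of a local solution. Next we show the solution exists for all $t$ with $a,b>0$ and $a',b'>0$. Then we study the asymptotics as $t\to\infty$: generically $a\sim Ae^t$ and $b\sim Be^t$. Rescaling the $\mathbb{H}^{n-2}$-factor is not allowed, since it is fixed, but changing $b_0$ lets us tune the ratio so that $b\sim \frac12 e^{t+c}$ and $a\sim\frac{\alpha}{2\pi}\cdot\frac12 e^{t+c'}$. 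After the reparametrisation $\theta\mapsto \frac{2\pi}{\alpha}\theta$ and a shift of $t$, this is asymptotic to $\sinh^2$ and $\cosh^2$. We then show the exponentially decaying error terms converge smoothly; this gives property 4. The main obstacle is this shooting step: choosing $b_0$ as a function of $\alpha$ so that the asymptotic constants match the hyperbolic ones. I would first try to use the first integral from the constraint to reduce to a planar autonomous system in $(a'/a,\,b'/b)$ and run a monotonicity/phase-plane argument. The hyperbolic solution, with $b_0=1$ and $\alpha=2\pi$, serves as the reference point, and continuity in $\alpha$ handles the rest.

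For negative curvature (property 3), the sectional curvatures are $-a''/a$, $-b''/b$, $-a'b'/(ab)$ and $(1-b'^2)/b^2$ up to the sign of the last term, which is $-(1+b'^2)/b^2$ in the hyperbolic base. Positivity of $a'',b''$ and of $a',b'$ follows from the ODE and a maximum-principle/comparison argument, once we know monotonicity.

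For the final claim we compare the solution with the hyperbolic one. Monotonicity of the map $\alpha\mapsto b_0(\alpha)$, obtained from a comparison (Sturm-type) argument for the linearised system, yields $b_0<1$ exactly when $\alpha<2\pi$. In that case $g_\alpha|_{\mathbb{H}^{n-2}}=b_0^2 g|_{\mathbb{H}^{n-2}}<g|_{\mathbb{H}^{n-2}}$. For $n=3$ the Einstein condition forces constant curvature, so $g_\alpha$ is a hyperbolic cone metric. For $n\ge4$ a hyperbolic cone metric must have $b=\cosh$ up to a shift, which forces $b_0=1$ and hence $\alpha=2\pi$.
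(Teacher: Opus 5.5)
This is a genuine gap. The paper does not prove the statement; it is quoted from Fine--Premoselli, where the model metrics come from an explicit formula rather than from shooting.

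\textbf{The shooting step is a programme, not a proof.} Your reduction to $dt^2+a^2d\theta^2+b^2g_{\mathbb{H}^{n-2}}$ and the ODE system are correct. But existence for every $\alpha$, global existence with $a',b'>0$, the exponential asymptotics and the equivalence $b_0<1\iff\alpha<2\pi$ are all left to unspecified phase-plane, continuity and Sturm-comparison arguments. Continuity cannot give surjectivity onto $(0,\infty)$ without the endpoint behaviour, which you do not identify. For $n\ge4$ the admissible range is $b_0^2>\frac{n-3}{n-1}$. At $b_0^2=\frac{n-3}{n-1}$ the solution is the product $\mathbb{H}^2\times\mathbb{H}^{n-2}$ with $b$ constant, and this is where $\alpha\to0$.

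Your matching step is also misstated. Once $a'(0)=\alpha/2\pi$ and $\theta$ has period $2\pi$, you may no longer reparametrise $\theta$. Asymptotic hyperbolicity then forces $A=B$, so $\alpha$ is \emph{determined} by $b_0$. The whole content is that $b_0\mapsto\alpha(b_0)$ is a monotone bijection onto $(0,\infty)$ with $\alpha(1)=2\pi$.

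\textbf{The missing idea is that the system integrates explicitly.} The equation ${\rm Ric}(e_t,e_t)={\rm Ric}(e_\theta,e_\theta)$ reads $b''a=a'b'$, so $a=b'/c$ for a constant $c$. Use $r=b$ as variable, write $V(r)=(b')^2$, and denote $r$-derivatives by subscripts. The $\mathbb{H}^{n-2}$-component of the Einstein equation becomes the linear equation $rV_r+(n-3)(V+1)=(n-1)r^2$. Hence
\[ g_\alpha=V^{-1}dr^2+c^{-2}V\,d\theta^2+r^2g_{\mathbb{H}^{n-2}},\qquad V(r)=r^2-1-2m\,r^{3-n}, \]
with $c=1$ forced by (iv). Then:
\begin{itemize}
\item $b_0=r_+$ is the largest zero of $V$.
\item $\alpha=\pi V_r(r_+)=\pi\bigl((n-1)r_+-(n-3)/r_+\bigr)$ is strictly increasing in $r_+$ and equals $2\pi$ at $r_+=1$.
\item (iv) holds because $V/(r^2-1)=1+O(r^{1-n})$.
\item $g_\alpha\vert\mathbb{H}^{n-2}=r_+^2\,g\vert\mathbb{H}^{n-2}$, which gives the final comparison directly.
\end{itemize}

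\textbf{Your curvature step fails, because $a''>0$ is false in general.} Expand $a=\frac{\alpha}{2\pi}t+a_3t^3+\dots$ and $b=b_0+b_2t^2+\dots$, and evaluate the Einstein equations on the singular locus. This gives
\[ \frac{a''}{a}(0)=\frac{(n-1)(4-n)}{2}+\frac{(n-2)(n-3)}{2b_0^2}. \]
This is negative once $n\ge5$ and $b_0^2>\frac{(n-2)(n-3)}{(n-1)(n-4)}$, and $b_0=r_+\to\infty$ as $\alpha\to\infty$. So for $n\ge5$ and large cone angle, the planes spanned by $\partial_t,\partial_\theta$ are positively curved near $\mathbb{H}^{n-2}$. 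No maximum-principle argument can give (iii) for all $\alpha\in(0,\infty)$; as quoted, (iii) holds for every $\alpha$ only when $n\le4$.

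In the range the paper actually uses, $\alpha=2\pi/d<2\pi$, one has $m<0$. The explicit curvatures are then $-V_{rr}/2$, $-V_r/(2r)$ (for both mixed planes) and $-(1+V)/r^2$. All are negative, the middle one because $V_r$ is increasing and $V_r(r_+)=\alpha/\pi>0$. Since the curvature operator of such a doubly warped product is diagonal in the adapted frame, this gives (iii). Your last step ($n=3$, and $b=\cosh$, $b_0=1$ for a hyperbolic metric when $n\ge4$) is fine once the monotonicity of $\alpha(b_0)$ is established.
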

%
%
%

Proposition \ref{prop:cone} allows to glue the restriction of the 
singular Einstein metric $g_\alpha$ to a collar neighborhood of $\Sigma$
in $\pi_1(\Sigma)\backslash \mathbb{H}^n$ which embeds into $M$ 
to the hyperbolic metric on $M\setminus \Sigma$ and construct in this way a cone metric on $M$ whose
pull-back to $M_d$ is  
a smooth metric on $M_d$ which is 
Einstein on the complement
of the gluing region. Provided that the collar neighborhood is large, explicit control on the gluing 
construction results in geometric control of the glued metric. 
Theorem \ref{main1} is proved by deforming such a well enough controlled 
metric to an Einstein metric using an 
implicit function theorem. This strategy is also used in \cite{GH25} in the K\"ahler case.
In \cite{J25}, building on the earlier work \cite{HJ22}, 
a different but related approach is used  to show that in any dimension $n\geq 4$, one can 
deform suitably pinched negatively curved 
metrics which are close to Einstein metrics in a suitable sense to Einstein metrics.

\section{Branched coverings of hyperbolic 3-manifolds}\label{sec:branched3}

Consider now a closed hyperbolic 3-manifold $M$ and an embedded oriented 
geodesic multicurve $c\subset M$ which 
is homologous to zero. By
Proposition \ref{prop:branched}, we know that for every 
$d\geq 2$ there exists a covering $M_d\to M$ of degree $d$, branched along $c$. 
It follows from the discussion in Section \ref{sec:cone} that this branched covering manifold
admits a hyperbolic cone metric with cone angle $2\pi d>2\pi$ which can be
deformed to a 
smooth negatively curved metric. We refer to \cite{GT87} (see also \cite{FS90}) for an explicit construction
of such a metric which can be chosen to be a warped product metric near the singular set of the cone metric.
Thus as a consequence of hyperbolization, there also exists a hyperbolic metric on $M_d$. 
Note that this is consistent with the fact that 
the Einstein cone metric on $\mathbb{H}^n$ with singular locus $\mathbb{H}^{n-2}$ whose existence was stated in 
Proposition \ref{prop:cone} is hyperbolic if and only if $n=3$.

Since $M_d$ is a cyclic branched cover of $M$, it admits a deck group action by a cyclic group of 
order $d$ of diffeomorphisms, with quotient 
$M$. The fixed point set of a generator 
$\zeta$ of this group 
is the preimage of the geodesic multicurve $c$ under the covering map.
By Mostow rigidity, there exists a cyclic group of order $d$ of isometries of $M_d$
for the hyperbolic metric.
This means that there exists an isometry $\zeta^\sharp$ of $M_d$ which is homotopic to $\zeta$ and 
such that $(\zeta^\sharp)^d={\rm Id}$.

As follows from the work \cite{CLW18}, in general it is difficult to determine the relation between the fixed point 
sets of $\zeta$ and $\zeta^\sharp$. However, in the specific situation of a cyclic branched covering, 
Proposition 5.8 of \cite{HJ24} and the following remark shows that 
the fixed point set 
${\rm Fix}(\zeta^\#)\subset X$ of 
$\zeta^\#$ 
is (abstractly) diffeomorphic to ${\rm Fix}(\zeta)$. 
Moreover, ${\rm Fix}(\zeta^\#)$ and ${\rm Fix}(\zeta)$ are freely homotopic inside $X$.
As a consequence, the quotient of $M_d$ by the group $\langle \zeta^\sharp\rangle$
generated by $\zeta^\sharp$ is a hyperbolic 
orbifold, or a
cone manifold, which is diffeomorphic to $M$, with singular locus homotopic to 
$c$. The cone angle equals 
$2\pi/d$. 

\begin{question}
Is it true that the length of $c$ for the smooth hyperbolic metric on 
$M_d$ is strictly smaller than the length of $c$ on $M$?
\end{question}

Note that as $M$ equipped with the cone metric is a quotient of $M_d$ equipped with a hyperbolic 
metric by a cyclic group of isometries, this shows that $M$ admits both the structure of a hyperbolic
manifold as well as a different structure of a hyperbolic orbifold.  
Since the quotient $\langle \zeta^\sharp \rangle \backslash M_d$ is a hyperbolic cone manifold so that 
the multicurve $c$ is the singular locus and the cone angle is at most $\pi$, we can 
now invoke a result of Kojima \cite{Ko98} and obtain the following.

\begin{thm}
The cone metric can be deformed to a complete finite volume hyperbolic 
metric on $M\setminus  c$.
\end{thm}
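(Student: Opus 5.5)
The plan is to deduce the statement from Kojima's deformation theory for hyperbolic cone manifolds \cite{Ko98}. That theory says the following. Let $N$ be a closed hyperbolic cone $3$-manifold whose singular locus is a link and all of whose cone angles are at most $\pi$. Then the cone structure deforms continuously, by decreasing all cone angles simultaneously to $0$. Along the deformation the singular locus stays a link isotopic to the original one. The limit is a complete hyperbolic structure of finite volume on the link complement.

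So we will verify the hypotheses of that theorem for $N=\langle\zeta^\sharp\rangle\backslash M_d$.

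\textbf{Step 1.} Identify the singular structure of $N$. By Mostow rigidity, $\zeta^\sharp$ is an isometry of order $d$ of the hyperbolic manifold $M_d$. By Proposition 5.8 of \cite{HJ24}, as recalled above, ${\rm Fix}(\zeta^\sharp)$ is a disjoint union of closed geodesics, diffeomorphic to and freely homotopic to ${\rm Fix}(\zeta)=\Pi^{-1}(c)$. Near a fixed geodesic, $\zeta^\sharp$ is a rotation about that geodesic. Because the action of $\langle\zeta^\sharp\rangle$ is effective and free away from the fixed set, each nontrivial power of $\zeta^\sharp$ has the same fixed set, and $\zeta^\sharp$ acts as a rotation of order exactly $d$ on each normal disk. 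Hence $N$ is a hyperbolic cone manifold with singular locus a link $c^\sharp$ and cone angle $2\pi/d$ along every component. We need $d\ge 2$, so $2\pi/d\le\pi$, which is exactly the hypothesis required by Kojima.

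\textbf{Step 2.} Identify $N$ with $M$ and $c^\sharp$ with $c$. This is the main obstacle. Homotopy alone is not enough: we need $c^\sharp\subset N$ to be isotopic to $c\subset M$ under a diffeomorphism $N\cong M$, so that the complement $N\setminus c^\sharp$ is diffeomorphic to $M\setminus c$. The approach I would try first is an equivariant one. Take the smooth $\langle\zeta\rangle$-invariant negatively curved metric on $M_d$, and connect it to the hyperbolic metric through a path of metrics. Then use the conjugacy of the finite group actions $\langle\zeta\rangle$ and $\langle\zeta^\sharp\rangle$, which holds for finite cyclic actions on hyperbolic $3$-manifolds with fixed circles by the orbifold theorem and geometrization of orbifolds. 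That conjugacy gives an equivariant diffeomorphism of $M_d$ carrying $\zeta$ to $\zeta^\sharp$. It therefore descends to a diffeomorphism $M\to N$ taking $c$ onto $c^\sharp$. Failing that, one can use the fact that homotopic simple closed geodesics of the pair are isotopic in the relevant cone manifold setting.

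\textbf{Step 3.} Apply \cite{Ko98} to $N$. This yields a one-parameter family of cone structures on $(M,c)$ with angles decreasing from $2\pi/d$ to $0$. The limit is a complete hyperbolic structure of finite volume on $M\setminus c$, and the components of $c$ become rank-two cusps. Along the way we should check that no degeneration occurs; this is guaranteed by Kojima's theorem precisely because all cone angles are at most $\pi$.
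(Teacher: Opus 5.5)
Your proposal is correct and follows the paper's route: Mostow rigidity gives the isometric cyclic group $\langle\zeta^\sharp\rangle$, the quotient of $M_d$ by it is a hyperbolic cone manifold whose singular locus is a link with cone angle $2\pi/d\le\pi$, and Kojima's deformation theorem then applies. Your Step 2 adds rigor where the paper only asserts that the quotient is diffeomorphic to $M$ with singular locus homotopic to $c$: conjugating $\zeta$ to an isometry via the orbifold theorem and then using uniqueness of isometries in a homotopy class gives $\phi\zeta\phi^{-1}=\zeta^\sharp$, hence a diffeomorphism of pairs $(M,c)\cong(N,c^\sharp)$ (the ``path of metrics'' and the vague isotopy fallback are unnecessary and can be dropped).
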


Together this yields the following result.

\begin{cor}\label{finitevol}
Let $c\subset M$ be any null homologous embedded finite collection of closed oriented geo\-de\-sics on 
the closed oriented hyperbolic 3-manifold $M$. Then 
$M\setminus c$ admits a complete hyperbolic metric of finite volume.
\end{cor}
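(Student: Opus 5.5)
The plan is to obtain the hyperbolic structure on $M\setminus c$ by chaining the preceding discussion with the theorem just stated. Since $c$ is null homologous, its class vanishes in $H_1(M;\mathbb{Z}/d\mathbb{Z})$ for every $d\geq 2$. Proposition \ref{prop:branched} therefore gives a $d$-fold cyclic branched covering $\Pi:M_d\to M$ branched along $c$. I would fix some $d\geq 2$, for instance $d=2$ or $d=3$.

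Next I would put a geometric structure on $M_d$. By Lemma \ref{conemetric}, $M_d$ carries a hyperbolic cone metric with cone angle $2\pi d>2\pi$ along $\Pi^{-1}(c)$. Following \cite{GT87} (see also \cite{FS90}), it can be smoothed near the singular locus to a negatively curved Riemannian metric, for example a warped product near the singular set. Hence $M_d$ is aspherical with word hyperbolic fundamental group, and by geometrization it admits a hyperbolic metric. Mostow rigidity then replaces the generator $\zeta$ of the deck group by an isometry $\zeta^\sharp$ of order $d$ in the same homotopy class.

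The step I expect to be the main obstacle is identifying $\mathrm{Fix}(\zeta^\sharp)$ with $\mathrm{Fix}(\zeta)=\Pi^{-1}(c)$. It must be abstractly diffeomorphic to it and freely homotopic to it, with the quotient $\langle\zeta^\sharp\rangle\backslash M_d$ diffeomorphic to $M$ by a diffeomorphism taking the singular locus to $c$ up to isotopy. For this I would invoke Proposition 5.8 of \cite{HJ24} as quoted above. One also needs that homotopy implies isotopy for these curves in $M$, so that the singular locus is really $c$ and not merely homotopic to it; I would appeal to the geodesic representative being isotopic to $c$, since $c$ is already geodesic in $M$. With this, $M$ carries a hyperbolic cone structure with singular locus $c$ and cone angle $2\pi/d\leq\pi$.

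Finally, Kojima's theorem, as stated in the preceding theorem, deforms this cone metric by decreasing the cone angle to $0$. The result is a complete finite volume hyperbolic metric on $M\setminus c$, which proves the corollary.
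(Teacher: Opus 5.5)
Your sequence of steps is the paper's own: the branched cover from Proposition~\ref{prop:branched}, smoothing as in \cite{GT87}, hyperbolization, Mostow rigidity, Proposition~5.8 of \cite{HJ24}, and Kojima's theorem \cite{Ko98} for cone angle $2\pi/d\leq\pi$. You also correctly locate the delicate point, but the way you propose to handle it does not work. As quoted, Proposition~5.8 of \cite{HJ24} only says that ${\rm Fix}(\zeta^\sharp)$ is abstractly diffeomorphic and freely homotopic to ${\rm Fix}(\zeta)$. It does not give a diffeomorphism of pairs from $\langle\zeta^\sharp\rangle\backslash M_d$, with its singular locus $c'$, onto $(M,c)$. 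The appeal to geodesic representatives does not supply one either. The curve $c'$ is a geodesic for the cone metric, not for the hyperbolic metric $g$ of $M$, so the fact that $c$ is the $g$-geodesic in the free homotopy class of $c'$ says nothing about the isotopy class of $c'$. Homotopic embedded curves in a $3$-manifold are in general not isotopic (crossing changes are homotopies), and the topology of the complement is not a homotopy invariant of the curve. So, as it stands, Kojima's theorem gives you a complete finite volume hyperbolic metric on $M\setminus c'$, not on $M\setminus c$.

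The paper's text passes over the same point: it states that the singular locus is homotopic to $c$ and then treats it as $c$. So an additional input is needed either way. The most direct one is Thurston's orbifold theorem (Boileau--Leeb--Porti): a smooth orientation preserving action of a finite group on a closed hyperbolic $3$-manifold with nonempty fixed point set is conjugate to an isometric action. Applied to the deck group $\langle\zeta\rangle$, this shows that $\langle\zeta\rangle\backslash M_d=M$ itself carries a hyperbolic cone metric whose singular locus is exactly $c$, with cone angle $2\pi/d\leq\pi$. Kojima's theorem then applies to $c$ directly, and the detour through $\zeta^\sharp$ and \cite{HJ24} is no longer needed. Alternatively, you can avoid cone manifolds altogether. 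Replacing a tubular neighborhood of $c$ by a warped product cusp gives a complete finite volume metric of pinched negative curvature on $M\setminus c$. Hence $M\setminus c$ is irreducible and atoroidal, and Thurston's hyperbolization theorem for Haken manifolds applies; this route does not even use that $c$ is null homologous.
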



An interesting question is how the hyperbolic metric on $M$ and the hyperbolic cone metric 
are related. A natural invariant to look at to this end is the volume of these metrics, which is 
well defined for hyperbolic cone metrics. The following is an easy consequence of 
a recent result of \cite{C++24}. 

\begin{prop}\label{volume}
The volume of the hyperbolic cone metric on $M$ is strictly larger than
the volume of the smooth hyperbolic metric. 
\end{prop}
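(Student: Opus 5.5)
We compare two structures on $M$: the smooth hyperbolic metric $g$, and the hyperbolic cone metric $g_c$ with singular locus $c$ and cone angle $2\pi/d$, which is the quotient of the hyperbolic manifold $M_d$ by $\langle\zeta^\sharp\rangle$. First we compute the cone volume. Away from ${\rm Fix}(\zeta^\sharp)$, a set of measure zero, the quotient map $M_d\to \langle\zeta^\sharp\rangle\backslash M_d$ is a $d$-sheeted isometric covering, so ${\rm vol}(M,g_c)={\rm vol}(M_d)/d$, where ${\rm vol}(M_d)$ is the volume of the smooth hyperbolic metric on $M_d$. Arguing as in Corollary \ref{cor:cone}, the claim ${\rm vol}(M,g_c)>{\rm vol}(M,g)$ becomes
\[
{\rm vol}(M_d)>d\,{\rm vol}(M)={\rm vol}(M_d,\Pi^*g),
\]
where $\Pi^*g$ is the hyperbolic cone metric on $M_d$ with cone angle $2\pi d$ from Lemma \ref{conemetric}.

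So it suffices to compare, on the fixed topological manifold $M_d$, the volume of the smooth hyperbolic metric with that of a hyperbolic cone metric whose cone angles exceed $2\pi$. We plan to invoke the result of \cite{C++24} here. It is a volume rigidity or comparison statement: a cone metric on a closed 3-manifold with curvature $\le -1$ in the Alexandrov sense (hyperbolic cone metrics with angles $\ge 2\pi$ are locally CAT$(-1)$) has volume at most the hyperbolic volume, with equality only if it is isometric to the hyperbolic metric. An alternative route gives at least the weak inequality. Hyperbolic volume is a multiple of the simplicial volume, ${\rm vol}(M_d)=v_3\|M_d\|$. The map $\Pi:(M_d,\Pi^*g)\to (M,g)$ has degree $d$, so $\|M_d\|\ge d\|M\|$, hence ${\rm vol}(M_d)\ge d\,{\rm vol}(M)$.

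The main obstacle is strictness. Equality would force the CAT$(-1)$ cone metric $\Pi^*g$ to be isometric to a smooth hyperbolic metric. We intend to rule this out by observing that the cone metric has cone angle $2\pi d>2\pi$ along the nonempty set $\Pi^{-1}(c)$. At such points the space of directions is a round sphere suspended with a circle of length $2\pi d$, so it is not locally isometric to $\mathbb{H}^3$. The equality case in \cite{C++24} (or in Gromov's volume comparison for the simplicial-volume route, through the characterization of the equality case) then yields the strict inequality. Dividing by $d$ gives ${\rm vol}(M,g_c)>{\rm vol}(M,g)$.
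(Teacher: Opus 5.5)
There is a genuine gap, in your main step and in the strictness argument.

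Your reduction is correct. Since $\langle\zeta^\sharp\rangle$ acts freely off a null set, ${\rm vol}(M,g_c)={\rm vol}(M_d)/d$, so the claim is equivalent to ${\rm vol}(M_d)>d\,{\rm vol}(M)$. This is the simplicial-volume reformulation the paper records in the remark after the proposition, and $\|M_d\|\ge d\|M\|$ correctly gives the weak inequality.

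\textbf{The main step.} The result of \cite{C++24} that the paper uses is a volume rigidity theorem for RCD spaces, i.e.\ under a \emph{lower} curvature bound. It says such a structure on a closed hyperbolic $3$-manifold has volume \emph{at least} the hyperbolic one, with equality only for the hyperbolic metric. Your cone metric $\Pi^*g$ on $M_d$, with angle $2\pi d>2\pi$, is locally CAT$(-1)$ but has no lower curvature bound along $\Pi^{-1}(c)$. Indeed, small balls centred there have volume about $d$ times that of Euclidean balls, which is incompatible with Bishop--Gromov. So that theorem does not apply to $(M_d,\Pi^*g)$. The CAT$(-1)$ upper bound with rigidity that you attribute to it is not what it proves, nor a standard result you can quote.

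\textbf{The fallback does not fix strictness.} In the simplicial-volume route, the equality case is Thurston's strict version of Gromov's degree theorem. It says: if $f:N\to M$ is a degree $d$ map between closed hyperbolic $3$-manifolds with ${\rm vol}(N)=d\,{\rm vol}(M)$, then $f$ is homotopic to a $d$-sheeted covering. This is a statement about the map $\Pi$, not about the metric $\Pi^*g$. The space of directions at $\Pi^{-1}(c)$ therefore plays no role there and yields no contradiction.

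\textbf{What is missing on your route} is a topological obstruction: $\Pi_*:\pi_1(M_d)\to\pi_1(M)$ is surjective. Indeed, $\pi_1(M\setminus c)$ is generated by the index-$d$ subgroup coming from $M_d\setminus\Pi^{-1}(c)$ together with a meridian of $c$. Moreover, $\pi_1(M\setminus c)\to\pi_1(M)$ is onto, and the meridian dies in $\pi_1(M)$. A $d$-sheeted covering with $d\ge 2$ has $\pi_1$-image of index $d$, so equality is impossible. With this added, your argument becomes a complete alternative proof using only Gromov--Thurston.

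\textbf{The paper's route} stays downstairs on $M$. There $g_c$ has cone angle $2\pi/d\le\pi$, so it is a limit of smooth metrics with sectional curvature $\ge -1$, hence an RCD space. Corollary 1.5 of \cite{C++24} then gives ${\rm vol}(M,g_c)\ge{\rm vol}(M,g)$, with equality only if $g_c$ is isometric to $g$. Your space-of-directions observation is exactly what rules this out, but it must be made at the cone points of angle $2\pi/d<2\pi$ on $M$, not at those of angle $2\pi d$ on $M_d$.
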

\begin{proof}
As the cone angle of the cone metric $h$ on $M$ does not exceed $\pi$, 
the cone metric 
is a limit of smooth metrics of sectional curvature bounded 
from below by $-1$ and hence $M$ equipped with this cone metric is an 
${\rm RCD}(2,3)$-space. Thus the strict inequality 
${\rm vol}(M,h)> {\rm vol}(M,g)$ where $g$ is the smooth hyperbolic metric 
is an immediate consequence
of Corollary 1.5 of \cite{C++24}. 
\end{proof}
 
\begin{remark}
Proposition \ref{volume} can be reformulated as stating that the simplicial volume of 
the $d$-fold branched cover of $M$ is strictly larger than $d$ times
the simplicial volume of $M$. We refer to \cite{Fr17} for more information on this point of view.
\end{remark}

The volume of a hyperbolic 3-manifold can be studied using topological tools. To obtain an upper bound one can proceed
as follows. Choose an effective triangulation of a closed hyperbolic manifold $M$. This triangulation can be
\emph{straightened} to a triangulation with geodesic edges and totally geodesic facets. The hyperbolic 
volume of a straightened simplex does not exceed the hyperbolic volume $\kappa_n$ of an ideal hyperbolic 
tetrahedron, so the volume of $M$ is at most $\kappa_n \vert T\vert$ where $\vert T\vert$ denotes the number of 
maximal simplices of the triangulation. 

Lower bounds on the volume of a hyperbolic manifold in terms of topological information are much harder to obtain. The following was shown in  
\cite{HJ22}.

\begin{thm}
  For every $g\geq 2$ there exists a constant $C(g)>0$ so that the following holds true.
  The volume of a closed hyperbolic $3$-manifold of Heegaard genus $g$ is at least
  $c(g) d_{\rm Hempel}$ where $d_{\rm Hempel}$ denotes the Hempel distance of the Heegaard splitting.
\end{thm}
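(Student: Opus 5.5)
This theorem is quoted from \cite{HJ22}, so the plan is to reconstruct its strategy rather than derive it from the earlier sections. The approach is a model-manifold argument in the spirit of the Minsky model for Kleinian surface groups. A closed hyperbolic $3$-manifold $M$ with a Heegaard splitting of genus $g$ and Hempel distance $d=d_{\rm Hempel}$ is decomposed into two handlebodies $H_1,H_2$ glued along a surface $S$ of genus $g$. The disk sets $\mathcal{D}_1,\mathcal{D}_2$ in the curve graph $\mathcal{C}(S)$ are at distance $d$. The aim is a lower bound of the form ${\rm vol}(M)\geq c(g)\,d$ whenever $d$ is sufficiently large; for small $d$ the bound follows, after shrinking $c(g)$, from the positive lower bound on the volume of closed hyperbolic $3$-manifolds.

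The steps, in order, are as follows.

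First, pick a geodesic $\gamma$ in $\mathcal{C}(S)$ of length $d$ joining $\mathcal{D}_1$ to $\mathcal{D}_2$. Use Masur--Minsky hierarchy machinery together with the Minsky model to build a model manifold for the doubly degenerate, or long quasi-Fuchsian, structure on $S\times\mathbb{R}$ whose end invariants are given by the endpoints of $\gamma$.

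Second, glue in handlebody models at both ends to obtain a model metric on $M$. This metric should have curvature pinched close to $-1$ away from a region of bounded size, and it should contain a product region $S\times[0,L]$ with $L$ comparable to $d$, in which every slice $S\times\{t\}$ has uniformly bounded geometry. This uses large $d$, since Hempel distance controls the subsurface projections of the disk sets. One must make sure there are no short curves that would thin out the region. If short curves do appear, use instead the bounded-length curves along a tight geodesic: each step of $\gamma$ still contributes a definite amount of volume.

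Third, show that the model metric is close to the actual hyperbolic metric in a way that preserves volume up to a multiplicative constant depending only on $g$. One route is to deform the model to the hyperbolic metric using the curvature-pinching and Einstein-deformation results of \cite{HJ22}; the alternative is to build a bilipschitz map with constants depending on $g$. Volume comparison then gives ${\rm vol}(M)\geq c(g)\,{\rm vol}(\text{product region})\geq c'(g)\,d$, because the product region contains order $d$ disjoint blocks, each of volume bounded below by the bounded-geometry condition.

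The main obstacle is the third step, which requires uniform control of the geometry of the hyperbolic metric on $M$ in terms of the model. This is where the size of $d$ is essential: I would first try to show that the bounded-geometry blocks in the model embed with uniformly controlled distortion. One could argue this by effective drilling or filling combined with geometric limits, or by a compactness argument. The compactness version goes by contradiction: take a sequence of manifolds with ${\rm vol}/d\to 0$ and pass to a geometric limit, which should be a doubly degenerate manifold with bounded geometry. Thickness then forces volume growth linear in the curve-graph distance, contradicting ${\rm vol}/d\to 0$.
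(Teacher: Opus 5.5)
The paper gives no proof of this statement; it is quoted from \cite{HJ22}. As the paper indicates, \cite{HJ22} builds a controlled, nearly hyperbolic model metric and deforms it to the hyperbolic metric with a stability theorem for Einstein metrics, and the volume bound comes from that control. Your outline points in this direction, but it has a genuine gap, starting in Step 2. Large Hempel distance does \emph{not} control subsurface projections of the disk sets; that is the separate \emph{bounded combinatorics} hypothesis of Brock--Minsky--Namazi--Souto. For example, take a curve $\alpha$ near the midpoint of $\gamma$ and compose the gluing map with $T_\alpha^k$. By quasiconvexity of the disk sets and the bounded geodesic image theorem, $d_{\rm Hempel}$ stays large, but $d_\alpha(\mathcal{D}_1,\mathcal{D}_2)\to\infty$, which in general produces arbitrarily short geodesics. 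So a product region $S\times[0,L]$ with $L\asymp d$ and uniformly bounded geometry does not exist in general. Neither option in Step 3 is then available as stated. The uniform bilipschitz models of Brock--Minsky--Namazi--Souto require bounded combinatorics. The Einstein-stability theorem is only the last step: it needs a model metric with small Einstein defect, controlled relative to the thin part, and building such a model for arbitrary large-distance splittings is the real work, which you do not supply. (Relatedly, in Step 1 the endpoints of $\gamma$ are curves, not end invariants, and $M$ is not $S\times\mathbb{R}$; tying a hierarchy model to the metric on $M$ is part of what must be proved.)

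Your fallback, that each step of $\gamma$ ``still contributes a definite amount of volume'' through bounded-length curves of a tight geodesic, is essentially the theorem itself and is not justified. Minsky's a priori length bounds are proved for Kleinian surface groups using $\pi_1$-injective pleated surfaces. A Heegaard surface is not $\pi_1$-injective, since each handlebody group surjects onto $\pi_1(M)$, so that argument does not transfer. Even granting length bounds, you would need $\gtrsim d$ of these curves to give distinct primitive closed geodesics in $M$. Only then would the packing estimate $\#\{\text{primitive closed geodesics of length}\le L\}\le C(L)\,{\rm vol}(M)$ convert them into volume. The compactness argument does not bypass this. If ${\rm vol}(M_k)$ stays bounded while $d_k\to\infty$, geometric limits have finite volume (J\o rgensen--Thurston), so they are not doubly degenerate, and this case needs its own argument. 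More generally, a geometric limit only controls bounded neighbourhoods of the basepoint. It can give a bound linear in $d$ only after the statement has been localized to disjoint regions of $M$ attached to subsegments of $\gamma$, which is again the missing step.
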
  

Here a Heegaard splitting of genus $g\geq 1$ of a closed oriented 3-manifold $M$ is the decomposition of $M$ 
into two handlebodies of genus $g$. The manifold  $M$ is obtained from these handlebodies
by identifying their  boundaries with an orientation reversing diffeomorphism.
The Hempel distance is an invariant of the gluing diffeomorphism. We refer to
\cite{HJ22} for more information.



\section{Branched coverings in dimension $n\geq 4$}
\label{sec:branched4}

In this section 
we consider a degree $d$ cyclic covering $M_d\to M$ of a closed hyperbolic oriented manifold 
of dimension $n\geq 4$ 
branched along a closed totally geodesic oriented submanifold $\Sigma$ of codimension two.
We assume that there exists an isometric involution 
$\iota:M\to M$ whose fixed point set is a compact (possibly disconnected)
totally geodesic embedded hyperplane so that $\Sigma$ is contained in a component $H$ of this
fixed point set and is null-homologous in $H$. 
We refer to \cite{FP20,HJ24} for a discussion of examples. 
By Proposition \ref{prop:branched}, for every $d\geq 2$ there exists a cover $M_d$ 
of $M$ of degree $d$, branched
along $\Sigma$. Our goal is to prove the third 
part of Theorem \ref{mainthm} and show that for at most one $d\geq 2$, this cyclic branched
covering admits a hyperbolic metric. 

Our strategy is to assume that
$M_d$ admits a hyperbolic metric and seek to obtain a geometric understanding of this metric. 
We carry this out in three subsections.

\subsection{Fixed point sets of isometries}\label{fixedpointset}  

Since $\Sigma$ is homologous to zero in $H$, it 
bounds a submanifold $H_0\subset H$.
Put $H_1=H\setminus H_0$. 


Fix a number $d\geq 2$. 
The $d$-fold covering $X$ of $M$ branched along the totally geodesic submanifold 
$\Sigma \subset H\subset M$ can be realized as follows. 
Let $M_{\rm cut}$ be obtained from $M$ by cutting along $H_0$, that is, $M_{\rm cut}$ is the metric completion of
$M-H_0$. Thus $M_{\rm cut}$
is a compact (topological) 
manifold whose boundary consists of two copies $H_0^{-}$ and $H_0^{+}$ of $H_0$ intersecting in $\Sigma$. 
The manifold $X$ 
is obtained by gluing $d$ copies $M_{\rm cut}^{1},\dots,M_{\rm cut}^{d}$ of $M_{\rm cut}$ along the boundary, so that the copy of $H_0^+$ in $M_{\rm cut}^{i}$ is glued to the copy of $H_0^{-}$ in $M_{\rm cut}^{i+1}$ (where the superscripts $i$ are taken ${\rm mod}\, d$), see \cite{FS90} for a similar
construction. 
We denote by $H_0^{d,i}\subset X$ the copy of $H_0$ in $X$ which is the projection of the copy of 
$H_0^+$  in $M_{\rm cut}^i$.

Locally near $H$, the isometric involution $\iota=\iota_M$ 
acts as a reflection in $H$, exchanging the two components of $U\setminus H$ where
$U$ is a tubular neighborhood of $H$ in $M$. Thus $\iota_M$ acts as an involution 
$\iota_{\rm cut}:M_{\rm cut}\to M_{\rm cut}$ which 
exchanges $H_0^{+}$ and $H_0^{-}$ and fixes $W={\rm Fix}(\iota_M)\setminus H_0\supseteq H_1$.

As a consequence, $\iota_M$ induces an 
involution $\iota:X\to X$ with the property that $\iota(M_{\rm cut}^i)=M_{\rm cut}^{d+2-i}$ 
and so that the restrictions $\iota\vert M_{\rm cut}^{i}:
M_{\rm cut}^{i} \to M_{\rm cut}^{d+2-i}$ are identified with 
$\iota_{\rm cut}:M_{\rm cut} \to M_{\rm cut}$ (superscripts are again taken ${\rm mod}\, d$). 

If the degree $d$ of the covering is odd, then the fixed point set of this involution of $X$ 
is the union  of the copy $W^1$ of $W$ in $M_{\rm cut}^1$ 
and of $H_0^{d,(d-1)/2}$, glued to  
 $H_1^1\subset W^1$ along $\Sigma$. 
If $d$ is even then the fixed point set of $\iota$ consists of the two copies of $W$ 
in $M_{\rm cut}^1$ and $M_{\rm cut}^{d/2+1}$ glued along $\Sigma$. 

Let $\zeta$ be a generator of the cyclic deck group of $X\to M$. 
It cyclically permutes the 
copies $M^1_{\rm cut},\dots,M_{\rm cut}^d$ of $M_{\rm cut}$ in $X$. 
If the degree $d$ is even define
$j=\zeta \circ \iota$ (read from right to left), and for odd degree define $j=\iota$. 
The following is now immediate from the construction.

\begin{fact}\label{factfixed}
\begin{itemize}
    \item 
If $d$ is even, then the fixed point set of $j$ in $X$ is the union 
${\rm Fix}(j)=H_0^{d,1}\cup H_0^{d,1+d/2}$,  
and the copies $H_0^{d,1}$ and $H_0^{d,1+d/2}$ of $H_0$ are
glued along $\Sigma$.
\item If $d$ is odd then the fixed point set of $j$ in $X$ is ${\rm Fix}(j)=W^1\cup H_0^{d,(d-1)/2}$, and it is 
homeomorphic to ${\rm Fix}(\iota_M)$. 
\end{itemize}
\end{fact}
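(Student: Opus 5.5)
We compute the fixed point sets directly from the combinatorial model of $X$ as $d$ copies $M_{\rm cut}^1,\dots,M_{\rm cut}^d$ glued cyclically. Superscripts are taken mod $d$. Under this model, $\zeta$ maps $M_{\rm cut}^i$ to $M_{\rm cut}^{i+1}$ by the identity of $M_{\rm cut}$, and $\iota$ maps $M_{\rm cut}^i$ to $M_{\rm cut}^{2-i}$ by $\iota_{\rm cut}$. The plan is to determine, for each $i$, which points of $M_{\rm cut}^i$ (interior and boundary) are fixed by $j$. Off the gluing locus each point of $X$ lies in the interior of exactly one copy. On the gluing locus a point lies in $H_0^{d,i}$, the common image of $H_0^{+}\subset M_{\rm cut}^i$ and $H_0^{-}\subset M_{\rm cut}^{i+1}$. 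The points of $\Sigma$ are common to all copies and are fixed by both $\zeta$ and $\iota$, hence by $j$.

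\emph{Interior points.} The map $j$ sends the interior of $M_{\rm cut}^i$ to the interior of $M_{\rm cut}^{\sigma(i)}$. Here $\sigma(i)=3-i$ if $d$ is even ($j=\zeta\circ\iota$), and $\sigma(i)=2-i$ if $d$ is odd ($j=\iota$). An interior fixed point therefore requires $\sigma(i)=i$.
\begin{itemize}
\item For $d$ even, $2i\equiv 3$ has no solution mod $d$, so $j$ fixes no interior points.
\item For $d$ odd, $2i\equiv 2$ has the unique solution $i=1$. On $M_{\rm cut}^1$ the map $j$ acts as $\iota_{\rm cut}$, whose fixed set in the interior is the interior part of $W$. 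Taking closures gives $W^1$, which contains $H_1^1$ and $\Sigma$.
\end{itemize}

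\emph{Gluing hypersurfaces.} The map $\iota_{\rm cut}$ exchanges $H_0^{+}$ and $H_0^{-}$ pointwise, i.e. it is the identity under the identification $H_0^{\pm}\cong H_0$. Hence $\iota$ maps $H_0^{d,i}$ (the image of $H_0^+\subset M_{\rm cut}^i$) onto the image of $H_0^-\subset M_{\rm cut}^{2-i}$, which is $H_0^{d,1-i}$. This map is the identity in the $H_0$-coordinate. The map $\zeta$ sends $H_0^{d,k}$ to $H_0^{d,k+1}$, again as the identity in that coordinate. So $j$ maps $H_0^{d,i}$ to $H_0^{d,2-i}$ if $d$ is even, and to $H_0^{d,1-i}$ if $d$ is odd, in both cases by the identity of $H_0$. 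Thus $H_0^{d,i}$ is pointwise fixed exactly when it is mapped to itself, and otherwise only its boundary $\Sigma$ is fixed.
\begin{itemize}
\item For $d$ even, $2i\equiv 2$ mod $d$ gives $i=1$ and $i=1+d/2$.
\item For $d$ odd, $2i\equiv 1$ mod $d$ gives the single solution $i=(d+1)/2$. Up to the paper's indexing convention for $H_0^{d,i}$ this is the copy denoted $H_0^{d,(d-1)/2}$. The main care needed in the whole argument is this bookkeeping of the indices; we will recheck it by verifying that $\iota$ has exactly one invariant gluing copy when $d$ is odd.
\end{itemize}

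\emph{Assembling.} For $d$ even, ${\rm Fix}(j)$ is $H_0^{d,1}\cup H_0^{d,1+d/2}$, two copies of $H_0$ glued along their common boundary $\Sigma$. For $d$ odd, ${\rm Fix}(j)$ is $W^1$ together with one copy of $H_0$, glued along $\Sigma$. The homeomorphism to ${\rm Fix}(\iota_M)=W\cup H_0$ is given by the identity on $W^1\cong W$ and the identification of the extra copy with $H_0$. These maps agree on $\Sigma$, so they define a homeomorphism.
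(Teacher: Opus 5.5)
Your proposal is correct, and it is exactly the direct verification from the cut-and-glue model that the paper dismisses as ``immediate from the construction.'' One point should be stated plainly rather than hedged as a matter of ``convention'': under the paper's own stated conventions ($H_0^{d,i}$ is the image of $H_0^+\subset M_{\rm cut}^i$, and $\iota(M_{\rm cut}^i)=M_{\rm cut}^{2-i}$), which are the ones you use, your computation $\iota(H_0^{d,i})=H_0^{d,1-i}$ shows that for odd $d$ the invariant copy is $H_0^{d,(d+1)/2}$. For instance, when $d=3$, $\iota$ swaps $M_{\rm cut}^2$ and $M_{\rm cut}^3$ and fixes their interface $H_0^{3,2}$, while $H_0^{3,1}\mapsto H_0^{3,3}$; so the index $(d-1)/2$ in the statement is an off-by-one slip, which is harmless for the conclusion.
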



The fixed point set of each of the involutions $\zeta^i \circ j \circ \zeta^{-i}$ ($i=0,\dots,d-1$) 
is the embedded submanifold $\zeta^i({\rm Fix}(j))$ of $X$. Their union cuts $X$ up into the $d$ copies of $M_{\rm cut}$
if $d$ is even, and into $d$ copies of $M\setminus H$ if $d$ is odd. 
We call any diffeomorphism of $X$ contained in the finite group of diffeomorphisms
of $X$ generated by $j$ and $\zeta$ an \emph{admissible} diffeomorphism of $X$.

By Mostow rigidity, any homotopy self-equivalence $\sigma$ of $X$ is homotopic to a unique isometry $\sigma^\#$ of $X$.
Furthermore, by uniqueness, the map
\[
	{\rm Homeo}(X) \to {\rm Isom}(X), \, \sigma \mapsto \sigma^\#
\]
which associates to a homeomorphism the unique isometry homotopic to it 
is a group homomorphism. 
The following result was established in \cite{HJ24}.

\begin{prop}[Proposition 5.8 of \cite{HJ24}]\label{fixed}
Let $\phi$ be 
an admissible diffeomorphism of $X$ 
and let $\phi^\#$ be the 
isometry of $X$ homotopic to $\phi$. Then 
the fixed point set 
${\rm Fix}(\phi^\#)\subset X$ of 
$\phi^\#$ 
is (abstractly) diffeomorphic to ${\rm Fix}(\phi)$. 
Moreover, ${\rm Fix}(\phi^\#)$ and ${\rm Fix}(\phi)$ are freely homotopic inside $X$.
\end{prop}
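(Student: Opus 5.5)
Since $X$ carries a hyperbolic metric and $\phi$ generates a finite group $G=\langle \phi\rangle$ of diffeomorphisms (admissible diffeomorphisms lie in the finite group generated by $j$ and $\zeta$), I would first use that $\sigma\mapsto\sigma^\#$ is a homomorphism. Then $G^\#=\langle\phi^\#\rangle$ is a finite group of isometries and $\phi\mapsto\phi^\#$ is injective on $G$: a nontrivial finite-order diffeomorphism of a closed aspherical manifold with centerless fundamental group is not homotopic to the identity. The fixed point set ${\rm Fix}(\phi^\#)$ is a disjoint union of closed totally geodesic submanifolds. The plan is to compare the two actions equivariantly. I would work in the universal cover $\mathbb{H}^n$. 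Choose a lift $\tilde\phi$ of $\phi$, and let $\tilde\phi^\#$ be the lift of $\phi^\#$ at bounded distance from $\tilde\phi$; both induce the same homeomorphism of $\partial\mathbb{H}^n$, since both are equivariant with respect to the same automorphism of $\pi_1(X)$. Components of ${\rm Fix}(\phi)$ correspond to conjugacy classes, under the action of $\pi_1(X)$, of lifts $\gamma\tilde\phi$ having fixed points. The same holds for $\phi^\#$. For the isometry $\gamma\tilde\phi^\#$, having a fixed point is equivalent to having finite order, i.e.\ $(\gamma\tilde\phi)^k={\rm id}$, which is a purely algebraic condition shared by $\gamma\tilde\phi$. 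In that case the fixed set of $\gamma\tilde\phi^\#$ is the totally geodesic subspace spanned by the boundary fixed points.

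The key steps, in order, are as follows. First, for each lift $\psi=\gamma\tilde\phi$ with nonempty fixed set, I would show that ${\rm Fix}(\psi)\subset\mathbb{H}^n$ is a properly embedded submanifold whose closure meets $\partial\mathbb{H}^n$ exactly in ${\rm Fix}(\partial\psi)$, and that it is $\mathrm{Stab}(\psi)$-equivariantly homotopy equivalent to the totally geodesic ${\rm Fix}(\psi^\#)$. For this I would use the explicit description of ${\rm Fix}(j)$ from Fact~\ref{factfixed}: it is a union of pieces of totally geodesic hypersurfaces of the cone metric, namely copies of $H_0$ or $W$ glued along $\Sigma$. These lift to hypersurfaces of $\mathbb{H}^n$ that are quasi-isometrically embedded, because the cone metric with angle $2\pi d\ge 2\pi$ is locally CAT$(-1)$ and the pieces are locally convex there. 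Second, from this I would deduce that the quotients by the stabilizers are homotopy equivalent closed aspherical manifolds. Third, I would count components: distinct conjugacy classes yield distinct components on both sides, which gives a bijection of components and a free homotopy between them. Fourth, I would upgrade the homotopy equivalence to a diffeomorphism. Components of ${\rm Fix}(\phi^\#)$ of codimension one (the case of the involutions) are closed hyperbolic manifolds, and ${\rm Fix}(\phi)$ is homeomorphic to ${\rm Fix}(\iota_M)$ or to a double of $H_0$. I would argue that the latter is itself a hyperbolic manifold, by Fact~\ref{factfixed} and the reflection structure, and then apply Mostow rigidity in dimension $n-1\ge3$. For the powers $\zeta^i$, the fixed set is $\Sigma$ in both cases, again a hyperbolic manifold, so the same argument applies.

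The main obstacle is the first step: proving that the codimension-one fixed set ${\rm Fix}(j)$ in the cone metric is convex enough that its lift is a quasi-convex hypersurface with the correct limit set. The difficulty is at $\Sigma$, where two copies of $H_0$ meet at a dihedral angle that has to be checked to be at least $\pi$ on each side in the cone metric. With angle total $2\pi d$ and the two halves of ${\rm Fix}(j)$ separating $d/2$ sheets each, each dihedral angle is $\pi d/2\ge\pi$ for even $d$, and the analogous count handles odd $d$. I would first try to verify this local convexity computation and then invoke the standard fact that locally convex subsets of CAT$(-1)$ spaces lift to convex subsets.
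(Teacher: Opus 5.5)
The paper does not prove this proposition; it quotes it from \cite{HJ24}, so I can only judge your outline on its own terms. The overall structure is sound. You use fixed-point classes of lifts. You use that $\gamma\tilde\phi$ and $\gamma\tilde\phi^\#$ induce the same automorphism of $\pi_1(X)$, and hence have the same centralizer $C$. You treat the components as $K(C,1)$'s and finish with Mostow rigidity. However, the component count has a genuine gap.

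\textbf{The gap.} You show that $\gamma\tilde\phi^\#$ has a fixed point if and only if it has finite order, and that finite order transfers to $\gamma\tilde\phi$. For a bijection of components you also need that every finite-order lift $\gamma\tilde\phi$ has a fixed point in $\tilde X$. Without this you only get an injection of components of ${\rm Fix}(\phi)$ into components of ${\rm Fix}(\phi^\#)$. Then ${\rm Fix}(\phi^\#)$ could have extra components, and neither conclusion follows. For a periodic diffeomorphism of a contractible manifold this is not automatic: Smith theory only guarantees fixed points in prime power order, and the rotations $\zeta^i$ can have other orders.

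\textbf{The fix.} Use something you mention only in passing. Every admissible diffeomorphism is an isometry of the cone metric $\Pi^*g_M$: $\zeta$ is a deck transformation, and $\iota$ satisfies $\Pi\circ\iota=\iota_M\circ\Pi$. The lift of this metric to $\tilde X$ is CAT$(-1)$, because the cone angle $2\pi d$ is at least $2\pi$. Consequently every finite group of isometries of $\tilde X$ fixes a point (its circumcenter), and the fixed set of each such lift is nonempty and convex, hence connected and contractible.

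This closes the gap and also removes what you call the main obstacle. No dihedral angle check is needed. (Incidentally, that angle is $\pi d$, not $\pi d/2$, since each side contains $d/2$ sheets of angle $2\pi$.) Likewise the quasi-convexity and limit-set discussion is superfluous. $C$ acts freely and cocompactly on the contractible sets ${\rm Fix}(\gamma\tilde\phi)$ and ${\rm Fix}(\gamma\tilde\phi^\#)$. So the corresponding components are both $K(C,1)$'s, and since $X$ is aspherical, their inclusions are freely homotopic.

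There are two smaller omissions in your last step. First, for even $d$ the admissible reflection $\iota=\zeta^{-1}\circ j$ is not conjugate to $j$ in the dihedral group $\langle\zeta,j\rangle$. Its fixed set consists of the double of $H_1$ along $\Sigma$, together with copies of the remaining components of ${\rm Fix}(\iota_M)$. These are also hyperbolic, so Mostow rigidity still applies, but the case has to be treated. Second, for $n=4$ the fixed set $\Sigma$ of the rotations is a surface, so there you need the classification of surfaces rather than Mostow rigidity.
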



From now on we always denote by $F$ the component of ${\rm Fix}(j)$ containing $\Sigma$ and by
$F^\#$ the homotopic component of ${\rm Fix}(j^\#)$ whose existence is guaranteed by Proposition 
\ref{fixed}. 
By \Cref{fixed} and Mostow rigidity for closed hyperbolic manifolds of 
dimension $n-1\geq 3$, there exists an isometry $\psi:F\to F^\#$ which maps 
$\Sigma$ to the fixed point set $\Sigma^\#$ of $\zeta^\#$. Furthermore, with a homotopy we may
identify $\Sigma$ and $\Sigma^\#$ in $X$. 
For each $i=0,\dots,d-1$, the map $(\zeta^\#)^i\circ \psi \circ \zeta^{-i}$ maps
$\zeta^{i}(F)$ isometrically onto $(\zeta^\#)^i(F^\#)$.

Although by \Cref{fixed}, the cyclic group generated by $\zeta^\#$ acts freely on $X\setminus \Sigma^\#$
and the manifold $F^\#$ is homotopic to $F$, this does not necessarily imply that 
$\zeta^\#(F^\#)\cap F^\#=\Sigma^\#$. The following lemma takes care of this issue.

\begin{lem}[Lemma 5.9 of \cite{HJ24}]\label{intersection}
\begin{enumerate}
    \item The differential 
    of $\zeta^\#$ acts on the normal bundle of $\Sigma^\#$ by a rotation 
    with angle $2\pi/d$. 
\item We have $F^\#\cap \zeta^\#(F^\#)=\Sigma^\#$.
\end{enumerate}
\end{lem}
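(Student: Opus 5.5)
For part (1), I will look at how the isometry $\zeta^\#$ acts near its fixed point set. By Proposition \ref{fixed}, applied to the admissible diffeomorphism $\zeta$, the set $\Sigma^\#={\rm Fix}(\zeta^\#)$ is a closed totally geodesic submanifold of codimension two that is freely homotopic to $\Sigma$. Since $\zeta^\#$ is an isometry of order $d$ fixing $\Sigma^\#$ pointwise, its differential acts on the normal bundle of $\Sigma^\#$ as a rotation by an angle $2\pi k/d$. This angle is constant along each component, and $k$ is prime to $d$ because the group generated by $\zeta^\#$ acts freely on $X\setminus\Sigma^\#$.

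To show that $k=\pm 1$, with the sign fixed by orientation, I will compare with $\zeta$. Take a small loop around $\Sigma^\#$ in a normal disk. In $X\setminus\Sigma^\#$ this loop is a meridian, and $\zeta^\#$ acts as a deck transformation of $X\setminus\Sigma^\#\to\langle\zeta^\#\rangle\backslash(X\setminus\Sigma^\#)$. The rotation number of $d\zeta^\#$ is detected by the homomorphism from $\pi_1$ of the quotient to $\mathbb{Z}/d\mathbb{Z}$, evaluated on the image of the meridian. Because $\zeta$ and $\zeta^\#$ are homotopic, and their fixed point sets are homotopic (Proposition \ref{fixed}), the regular neighborhoods correspond. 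Equivalently, I can use the isometry $\psi:F\to F^\#$ and the fact that $F^\#$ is fixed by $j^\#$. Then $j^\#$ and $\zeta^\#j^\#$ are reflections in hyperplanes through $\Sigma^\#$. For even $d$, $j=\zeta\iota$ and $\iota$ are both involutions, so $\zeta$ is a product of two reflections, and the same holds for $\zeta^\#$. Hence $d\zeta^\#$ on the normal plane is twice the angle between the hyperplanes ${\rm Fix}(\iota^\#)$ and ${\rm Fix}(j^\#)$ at $\Sigma^\#$. The total configuration of the $2d$ half-hyperplanes $(\zeta^\#)^iF^\#$ around $\Sigma^\#$ is then cyclically ordered by the rotation. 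The angle is $2\pi/d$ exactly when consecutive images are adjacent, so this step reduces to part (2).

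For part (2), suppose $F^\#\cap\zeta^\#(F^\#)$ contains a point $x\notin\Sigma^\#$. Then $x$ lies on two totally geodesic hypersurfaces, so they either intersect transversally in a codimension-two totally geodesic submanifold or coincide locally. If they coincide, then $\zeta^\#(F^\#)=F^\#$ as closed hypersurfaces, contradicting that $\zeta(F)\neq F$ and that they are not homotopic, which is detected by intersection numbers with a loop. If they intersect transversally, the component $C$ of the intersection through $x$ is a closed totally geodesic codimension-two submanifold. I will then use homotopy: the pair $F,\zeta(F)$ meets only along $\Sigma$, and for closed hyperbolic manifolds totally geodesic intersections are determined by fundamental group data. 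An extra component $C$ would give a subgroup of $\pi_1(X)$ conjugate into both $\pi_1(F)$ and $\pi_1(\zeta F)$ that is not conjugate into $\pi_1(\Sigma)$. Such a subgroup cannot exist for the topological pair, because $F\cap\zeta F=\Sigma$ there, and a van Kampen/Bass–Serre argument on the splitting of $X$ along the union of the copies $\zeta^i(F)$ forces the intersection of the vertex groups to be $\pi_1(\Sigma)$.

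I expect this group-theoretic step to be the main obstacle: showing that $\pi_1(F)\cap g\pi_1(\zeta F)g^{-1}$ is conjugate into $\pi_1(\Sigma)$ for all $g$. I would try to prove it by decomposing $X$ into the pieces $M_{\rm cut}$ glued along the $\zeta^i(F)$ and applying the normal form for graphs of groups.
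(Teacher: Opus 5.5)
\textbf{There are genuine gaps in both parts.} (The paper only cites \cite{HJ24} for this lemma, so I am judging your argument on its own terms.)

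\emph{Part (1).} Two steps fail.
\begin{enumerate}
\item The step ``because $\zeta$ and $\zeta^\#$ are homotopic and their fixed point sets are homotopic, the regular neighborhoods correspond'' is unjustified. A homotopy is not a conjugacy, and $\Sigma,\Sigma^\#$ are only freely homotopic. In codimension two this gives no identification of $X\setminus\Sigma$ with $X\setminus\Sigma^\#$, nor of their meridians. A meridian is null-homotopic in $X$, so nothing in $\pi_1(X)$ sees it.
\item Your fallback, that ``this step reduces to part (2)'', is also invalid. Statement (2) says \emph{where} $F^\#$ and $\zeta^\#(F^\#)$ meet; (1) says at \emph{what angle} they meet along $\Sigma^\#$. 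If $d\zeta^\#$ rotated by $2\pi k/d$ with $k\not\equiv\pm1 \bmod d$, the two hyperplanes would simply meet along $\Sigma^\#$ at angle $2\pi k/d$ (mod $\pi$), and nothing in (2) excludes that. Your adjacency criterion is also wrong for odd $d$: there the $2d$ half-hyperplanes are spaced by $\pi/d$, so $F^\#$ and $\zeta^\#(F^\#)$ are never adjacent, even when $k=1$.
\end{enumerate}
The missing idea is a way to transport the rotation number, a topological invariant of the action near the fixed set, from $\zeta$ to $\zeta^\#$. Here is one route. The cone metric of angle $2\pi d$ on $X$ is locally CAT$(-1)$, and $\zeta$ is an isometry of it. Take lifts $\tilde\zeta,\tilde\zeta^\#$ of order $d$ fixing lifts of $\Sigma,\Sigma^\#$. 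Together with $\pi_1(X)$ they generate isomorphic extensions, and the equivariant homeomorphism of boundary spheres conjugates $\tilde\zeta$ to $\tilde\zeta^\#$. The rotation number of a $\mathbb{Z}/d\mathbb{Z}$-action on $S^{n-1}$ that fixes an $S^{n-3}$ and is free off it is an orientation-sensitive conjugacy invariant, and it equals $1$ for $\tilde\zeta$.

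\emph{Part (2).} Reducing to the claim that $\pi_1(F)\cap g\,\pi_1(\zeta F)g^{-1}$ is trivial or conjugate into $\pi_1(\Sigma)$ for every $g$ is reasonable. But that claim is the whole content, and your proposed tool does not reach it. All the $\zeta^i(F)$ contain $\Sigma$ and cross each other there, so cutting along their union is not a graph-of-groups splitting. Also, the groups you need to intersect are hypersurface groups, not vertex groups. The set equality $F\cap\zeta(F)=\Sigma$ alone does not control intersections of conjugate subgroups; you need convexity. A workable argument:
\begin{enumerate}
\item In the locally CAT$(-1)$ cone metric, $F$ and $\zeta(F)$ are locally convex: the two halves of each meet along $\Sigma$ at angle $\pi d\geq\pi$ on both sides. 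Hence their lifts to the universal cover are convex.
\item Suppose $C$ is a component of $F^\#\cap\zeta^\#(F^\#)$ that is not a component of $\Sigma^\#$. A lift $\tilde C$ lies in lifts of $F^\#$ and $\zeta^\#(F^\#)$ whose limit sets share the sphere $\partial\tilde C$.
\item Via the boundary homeomorphism, the convex lifts of $F$ and $\zeta(F)$ with the same stabilizers share these limit points. So they contain the geodesic lines joining them and therefore intersect.
\item Their intersection is a convex subset of the preimage of $\Sigma$, hence lies in a single lift $\tilde\Sigma$. Thus $\partial\tilde C\subseteq\partial\tilde\Sigma$.
\item Transporting back, $\partial\tilde C$ is the boundary of a lift of $\Sigma^\#$, so $C\subseteq\Sigma^\#$, a contradiction.
\end{enumerate}
The coincidence case is cleaner done locally: $\zeta^\#(F^\#)=F^\#$ would force $d\mid 2k$. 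This needs $d\geq 3$, and indeed for $d=2$ one has $\zeta(F)=F$, so (2) fails as stated in that case.
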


Let $N$ be the compact hyperbolic manifold with totally geodesic boundary $\partial N$ which is obtained by
cutting $M$ open along $H$, that is, $N$ is the metric completion of $M-H$. 
If $H$ is non-separating,
then $N$ is connected,  otherwise $N$ has two connected components. The 
boundary $\partial N$ 
of $N$ is totally geodesic and consists of two copies of $H$ containing one copy of $\Sigma$ each.  
The main tool towards the third part of  Theorem \ref{mainthm} is the following result. 

\begin{prop}\label{prop:boundaryrigidity}
If the cyclic $d$-fold branched cover $X$ of $M$ admits a hyperbolic metric, then
there exists a hyperbolic cone manifold 
$N^\# $ 
satisfying the following properties:
\begin{enumerate}
    \item $N^\#$ is homotopy equivalent to $N$.
   \item The boundary $\partial N^\#$ of $N^\#$ is path isometric to $\partial N$.
    \item The singular locus of $N^\#$ consists of the two copies of $\Sigma$ in $\partial N^\#$. The 
    cone angle at each of these copies  equals $2\pi/d$.
\end{enumerate}
\end{prop}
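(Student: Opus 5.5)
The idea is to cut the hyperbolic manifold $X$ along the totally geodesic hypersurfaces $(\zeta^\#)^i(F^\#)$. These are the fixed point sets of the isometric involutions $(\zeta^\#)^i\circ j^\#\circ(\zeta^\#)^{-i}$, and the pieces should then be folded into a single fundamental domain. I sketch the even case; the odd case is analogous, with $F$ replaced by $W^1\cup H_0^{d,(d-1)/2}$.

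\textbf{Step 1: the hypersurfaces.} By Proposition \ref{fixed}, the fixed point set of each involution $(\zeta^\#)^i j^\# (\zeta^\#)^{-i}$ is a closed totally geodesic hypersurface. Fixed point sets of isometries are totally geodesic, and here the codimension is one because the set is homotopic to the codimension one set ${\rm Fix}(\zeta^i j\zeta^{-i})$. Its component containing $\Sigma^\#$ is $(\zeta^\#)^i(F^\#)$. By Lemma \ref{intersection}(2), any two of these hypersurfaces meet exactly in $\Sigma^\#$. By Lemma \ref{intersection}(1), near $\Sigma^\#$ they look like $2d$ half-hyperplanes through $\Sigma^\#$ at successive angles $\pi/d$. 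Two consecutive half-hyperplanes bound a wedge of angle $\pi/d$, which is exactly half of the rotation angle $2\pi/d$ of $\zeta^\#$.

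\textbf{Step 2: cutting.} Let $Y^\#$ be the metric completion of $X$ minus the union of these hypersurfaces and of the remaining components of ${\rm Fix}$ of these involutions. Combinatorially this union is homotopic to the union cutting $X$ into copies of $M_{\rm cut}$. The isometries $\zeta^\#$ and $j^\#$ permute the complementary pieces as in the topological picture. So one complementary piece $P^\#$ is a fundamental domain for the group generated by $\zeta^\#$ and $j^\#$, modulo the reflections.

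I expect the main obstacle here. One must show that the complementary regions of the hyperbolic hypersurfaces match those of the topological model, so that $P^\#$ is homotopy equivalent to the corresponding piece of $X$, i.e.\ to (half of) $N$. The approach I would try first is to use the free homotopy between ${\rm Fix}(\phi^\#)$ and ${\rm Fix}(\phi)$ from Proposition \ref{fixed}, together with disjointness away from $\Sigma^\#$ (Lemma \ref{intersection}). One can then lift to the universal cover $\mathbb{H}^n$: the lifts are disjoint hyperplanes or hyperplanes meeting along lifts of $\Sigma$, and their combinatorics (which lifts separate which) is determined by the action of $\pi_1$. This is because hyperplanes in $\mathbb{H}^n$ are determined by their boundary spheres, and the boundary spheres are determined by the stabilizer subgroups. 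Equivariance then gives a $\pi_1$-equivariant identification of complementary regions, and hence the homotopy equivalence.

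\textbf{Step 3: regluing.} Define $N^\#$ as the union of the region $P^\#$ bounded between $F^\#$ and $\zeta^\#(F^\#)$ and its image under the reflection, glued so that the topological model reproduces $N$. Each boundary component is one hyperbolic copy of $F^\#$, or more precisely of its halves $H_0^\#\cup H_1^\#$. Through $\psi$ and Mostow rigidity in dimension $n-1\geq 3$, this copy is isometric to $H$, which gives item (2) up to path isometry along $\Sigma$. Along the two copies of $\Sigma^\#$ the region has dihedral angle equal to the wedge between consecutive half-hyperplanes. Gluing the two halves $H_0,H_1$ of the boundary into the flat boundary of $N$ produces the cone singularity with total angle $2\cdot\pi/d=2\pi/d$, which is item (3). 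Item (1) follows from Step 2.
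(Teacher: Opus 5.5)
\paragraph{The even case.} There is a genuine gap, and it sits in the even case you chose to sketch. By Fact~\ref{factfixed}, $F=H_0^{d,1}\cup H_0^{d,1+d/2}$, so $\zeta^{d/2}(F)=F$.

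\begin{itemize}
\item The conjugates $(\zeta^\#)^i j^\#(\zeta^\#)^{-i}$ therefore give only $d/2$ distinct hypersurfaces. That is $d$ half-hyperplanes through $\Sigma^\#$, all copies of $H_0$, at successive angles $2\pi/d$.
\item Your picture of $2d$ half-hyperplanes at angle $\pi/d$ is correct only for odd $d$, where $F$ has one $H_0$-half and one $H_1$-half.
\item For even $d$, cutting along these walls yields copies of $M_{\rm cut}^\#$. These are homotopy equivalent to $M_{\rm cut}$, not to $N$.
\item Also $F^\#\cong H_0\cup_\Sigma H_0$ has no $H_1$-half, so your description of the boundary in Step~3 fails.
\end{itemize}

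The missing ingredient is the second conjugacy class of reflections. Let $j_0$ be the involution with fixed set $H_1^1\cup H_1^{1+d/2}$; its isometric representative is $j_0^\#=j^\#\circ\zeta^\#$. It lies in the group $\langle\zeta^\#,j^\#\rangle$ you mention in Step~2, but Steps~1--3 never use it.

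\begin{itemize}
\item $j_0^\#$ agrees with $\zeta^\#$ on ${\rm Fix}((\zeta^\#)^{-1}j^\#\zeta^\#)$. Hence it preserves $M_{\rm cut}^\#$ and exchanges the two components of $\partial M_{\rm cut}^\#\setminus\Sigma^\#$.
\item So its fixed set in $M_{\rm cut}^\#$ is a totally geodesic $H_1^\#$. It meets the boundary only in $\Sigma^\#$ and bisects the angle $2\pi/d$.
\item $N^\#$ is obtained by \emph{cutting} $M_{\rm cut}^\#$ along $H_1^\#$, not by gluing $P^\#$ to a reflected copy.
\item Each boundary component is then an $H_0$-half of $F^\#$ joined along $\Sigma^\#$ to an $H_1$-half of ${\rm Fix}(j_0^\#)$. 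This is what yields (2) and (3).
\end{itemize}

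\paragraph{The main obstacle.} The step you flag as the main obstacle is the heart of the proposition, and you leave it as a plan. Lemma~\ref{intersection}(2) controls only $F^\#\cap\zeta^\#(F^\#)$. So your claim that any two walls meet exactly in $\Sigma^\#$ is part of what must be proved, not an input.

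Your boundary-at-infinity idea could plausibly work. It needs a model of $X$ in which the \emph{topological} walls are already convex, so that their crossing and nesting pattern can be read off from the same limit spheres in $\partial\pi_1(X)$. The locally CAT$(-1)$ cone metric of angle $2\pi d$ from Lemma~\ref{conemetric} would serve. You would still have to show that each complementary region corresponds to a nonempty region with the same stabilizer.

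The paper avoids this machinery:
\begin{itemize}
\item Since $\pi_1(\partial Z)=\pi_1(\partial Z^\#)$, van Kampen writes $\pi_1(X)$ in two ways as an amalgam over this common edge group.
\item The normal form shows $\pi_1(Z^\#)$ is one of the two vertex groups.
\item $\zeta_*=\zeta^\#_*$ distinguishes the two vertex groups for $d\ge 3$.
\item Asphericity of both pieces then gives (1).
\end{itemize}
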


\begin{rem}\label{kerckhoffstorm}
As the dimension of $N$ is at least four, it
follows from  a result of Kerckhoff and Storm \cite[Theorem 2.5]{KS12} 
that there is no continuous deformation of the compact hyperbolic manifold $N$ with totally geodesic boundary,
viewed as a convex cocompact hyperbolic manifold,  to 
the hyperbolic cone manifold $N^\#$ with the same fundamental group 
within the space of convex cocompact hyperbolic manifolds. 

This does not reduce Theorem \ref{mainthm} to Proposition 
\ref{prop:boundaryrigidity} as the result of Kerckhoff and Storm does not rule out that there are 
isolated faithful convex cocompact representations of $\pi_1(N)$ into ${\rm PO}(n,1)$ 
which can not be deformed to the representation defining $N$. 
\end{rem}

\subsection{The proof of Proposition \ref{prop:boundaryrigidity}}\label{even}
Throughout we assume that 
$X$ admits a hyperbolic metric. 
Denote as before by $F$ the component of the fixed point set of 
the involution $j$ defined in Fact \ref{factfixed} which contains $\Sigma$. 
The union $\cup_i \zeta^i(F)$ 
of the corresponding components of the 
fixed point sets of the involutions $\zeta^i \circ j \circ \zeta^{-i}$ cut $X$ up into $d$ copies of the manifold 
$M_{\rm cut}$ if $d$ is even, and into $d$ copies of the manifold 
$N$ 
from \Cref{prop:boundaryrigidity} if $d$ is odd.

Using these conventions, Proposition \ref{fixed} shows that 
the fixed point set of 
each of the isometric 
involutions $(\zeta^\#)^i \circ j^\# \circ (\zeta^\#)^{-i}$ homotopic  to 
$\zeta^i \circ j \circ \zeta^{-i}$
has a component 
$(\zeta^\#)^i(F^\#)$ which 
is a hyperplane isometric to the component $\zeta^i(F)$ of the fixed point set of 
$\zeta^i \circ j\circ \zeta^{-i}$ and contains $\Sigma$. 
We shall show that the union $\cup_i (\zeta^\#)^i(F^\#)$ of these submanifolds of $X$ 
cut $X$ into $d$ sectors homotopy equivalent to $M_{\rm cut}$ if $d$ is even, and homotopy equivalent
to $N$ if $d$ is odd. This yields Proposition \ref{prop:boundaryrigidity} if $d$ is odd. 
If $d$ is even then such a sector 
contains an embedded isometric copy of $H_1$ which intersects the boundary of 
$M_{\rm cut}^\#$ in $\Sigma^\#$. Cutting $M_{\rm cut}^\#$ along this copy of $H_1$ 
then yields a hyperbolic cone manifold with the properties stated in 
Proposition \ref{prop:boundaryrigidity}.


For each $i=0,\dots,d-1$, 
\Cref{fixed} yields an isometry $\phi_i:\zeta^i(F) \xrightarrow{\cong} (\zeta^\#)^i(F^\#)$ such that the maps
\[
	{\rm incl}_{\zeta^i(F)}: \zeta^i(F) \hookrightarrow X 
	\quad \text{and} \quad 
	{\rm incl}_{(\zeta^\#)^i(F^\#)} \circ \phi_i: \zeta^i(F) \xrightarrow{\cong} (\zeta^\#)^i(F^\#) \hookrightarrow X
\]
are homotopic. As 
$(\zeta^\#)^i(F^\#) \subseteq X$ is totally geodesic,
$\phi_i(\Sigma) \subseteq X$ is a closed totally geodesic submanifold of codimension two which is in the same free homotopy class as $\Sigma$. Since each free homotopy class can contain at most one totally geodesic representative, this implies that $\phi_0(\Sigma)=\dots=\phi_{d-1}(\Sigma)$. We define $\Sigma^\#\subset X$ to be this closed totally geodesic submanifold of codimenson two. So, by construction, $\Sigma^\# \subseteq (\zeta^\#)^i(F^\#)$ for all $i=0,\dots,d-1$.

After possibly changing the hyperbolic metric of $X$ with an isotopy, we may assume that for each connected component $\Sigma_0$ of $\Sigma$ we have $\Sigma_0 \cap \Sigma_0^\# \neq \emptyset$, where $\Sigma_0^\#=\phi_0(\Sigma)$. So, for each component, we can fix a basepoint $x_0 \in \Sigma_0 \cap \Sigma_0^\#$, 
and we may assume without loss of generality that $\phi_0(x_0)=x_0$. We call such a basepoint \emph{preferred}. 
Due to \Cref{fixed}, we may also assume that
\[
    \pi_1(\Sigma_0,x_0)=\pi_1(\Sigma_0^\#,x_0)
    \quad \text{and} \quad
    \pi_1(F,x_0)=\pi_1(F^\#,x_0).
\]
In the sequel, the fundamental
group $\pi_1(X,x_0)$ will always be represented with respect to a fixed choice $x_0$ of 
preferred basepoint. 
We can now prove \Cref{prop:boundaryrigidity}.

\begin{proof}[Proof of \Cref{prop:boundaryrigidity}] 
By construction, the subspace $F\cup \zeta(F)$ of $X$ separates $X$. If $d$ is even, then
by the definition of the map $j$, 
the complement $X-(F\cup \zeta(F))$ contains two connected components whose closures are homeomorphic 
to $M_{\rm cut}$. If $d$ is odd then it contains one connected component whose closure
is homeomorphic
to $M_{\rm cut}$. In both cases, let $Z$ be the closure of such a component. 
Its boundary consists of 
two copies of $H_0$ glued along $\Sigma$. 

By Lemma \ref{intersection}, there exists a corresponding component $M_{\rm cut}^\#$ of 
$X-(F^\#\cup \zeta^\#(F^\#))$. The boundary of its closure $Z^\#$ 
is connected and consists of two copies of $H_0$ meeting 
along $\Sigma$ with an angle $2\pi/d$. Identifying $\Sigma$ and $\Sigma^\#$ as before and choosing
a basepoint $x\in \Sigma$, we claim that $\pi_1(Z,x)=\pi_1(Z^\#,x)$. 

Namely, as $\pi_1((\zeta^\#)^i(F^\#),x_0)=\pi_1(\zeta^i(F),x_0)$ for all
$i=0,\cdots,d-1$, it holds that $\pi_1(\partial Z,x)=
\pi_1(\partial Z^\#,x)$. As $\partial Z$ is a separating hypersurface in 
$X$ homotopic to $\partial Z^\#$, by the theorem of Seifert-van Kampen, we know that 
\begin{equation*}\pi_1(X,x) =\pi_1(Z,x)*_{\pi_1(\partial Z,x)} \pi_1(X\setminus Z,x)
=\pi_1(Z^\#,x)
*_{\pi_1(\partial Z,x)}\pi_1(X\setminus Z^\#,x).\end{equation*}
It then follows from the normal form for amalgamated products \cite[p.186]{LS01}  
that $\pi_1(Z^\#,x)$ is isomorphic to either 
$\pi_1(Z,x)$ or to $\pi_1(X\setminus Z,x)$. 

If $d=2$ then $\pi_1(Z,x)$ is isomorphic to $\pi_1(X\setminus Z,x)$ and the claim is clear. 
If $d\geq 3$ then note that $\zeta_*=\zeta_*^\#$ maps 
$\pi_1(Z,x)$ to a proper subgroup of $\pi_1(X\setminus Z,x)$, and it maps 
$\pi_1(X\setminus Z,x)$
to a proper supergroup of $\pi_1(Z,x)$. Furthermore, it maps 
$\pi_1(Z^\#,x)$ to a proper subgroup of $\pi_1(X\setminus Z^\#,x)$ and it maps 
$\pi_1(X\setminus Z^\#,x)$ to a proper supergroup of 
$\pi_1(Z^\#,x)$. Thus we have $\pi_1(Z,x)=\pi_1(Z^\#,x)$ as claimed.

Note that if $d$ is odd, then the component $Z^\#$ contains a totally geodesic hypersurface 
isometric to $H_1$ which intersects the boundary of $Z^\#$ along $\Sigma^\#$. There exists
an isometric involution of $M_{\rm cut}^\#$ which exchanges the two copies of 
$H_0$ in its boundary and hence $H_1$ meets the boundary of $M_{\rm cut}^\#$ with an angle of 
$\pi/d$. 
We refer to Fact \ref{factfixed} for more information.
Cutting $Z^\#$ open along this hypersurface  then yields a cone manifold with the properties stated in 
\Cref{prop:boundaryrigidity}. 

Now assume that the covering degree $d$ is even. Note that the roles of the hypersurfaces
$H_0$ and $H_1$ can be exchanged and hence there exists a second involution $j_0$ of $X$ 
whose fixed point set is 
the union
$H_1^1\cup H_1^{1+d/2}$ of the copies of $H_1$ in $M_{\rm cut}^1$ and $M_{\rm cut}^{1+d/2}$, glued along $\Sigma$, as fixed point set.
Denote by 
$j_0^\#$ the isometric involutions of $X$ freely homotopic to $j_0$. 
As $j$ and $j_0$ can be exchanged in the arguments used so far, we may assume that 
$j_0$ is admissible. 
Thus 
${\rm Fix}(j_0^\#)$ is a separating totally geodesic hyperplane in $X$ which is isometric to two copies of $H_1$ glued along $\Sigma$. 
This hyperplane contains $\Sigma^\#$ as $\Sigma^\#$ is the unique submanifold of $X$ homotopic to $\Sigma$ which contains
each closed geodesic in the free homotopy class of an element of $\pi_1(\Sigma)$. 
Furthermore, $j_0^\#$ fixes $\Sigma^\#$ pointwise, and it 
exchanges the two components of $X\setminus{\rm Fix}(j_0^\#)$. 

By Mostow rigidity, we have
\[j_0^\#= j^\# \circ \zeta^\#.\] In particular, 
for $x\in {\rm Fix}((\zeta^\#)^{-1}\circ j^\#\circ \zeta^\#)$ it holds $j_0^\#(x)=\zeta^\#(x)$. As a consequence, $j_0^\#$ 
induces an isometric involution of $M_{\rm cut}^\#$ which 
exchanges the two components of 
$\partial M_{\rm cut}^\#\setminus\Sigma^\#$. As $j_0^\#$ acts as a reflection along a connected separating hyperplane in $X$
containing $\Sigma^\#$, the restriction of $j_0^\#$ acts 
 as a reflection 
along a totally geodesic embedded hyperplane $H_1^\#$ which intersects $\partial M_{\rm cut}^\#$ in $\Sigma^\#$. 
In particular, as $j_0^\#$ is an isometry, the hyperplane $H_1^\#$ meets a component of 
$\partial M_{\rm cut}^\#\setminus\Sigma^\#$
along $\Sigma^\#$ with an angle of $\pi/d$. Thus cutting $M_{\rm cut}^\#$ open along $H_1^\#$ yields a hyperbolic manifold
$N^\#$ with piecewise totally geodesic boundary with properties (ii) and (iii) of  Proposition \ref{prop:boundaryrigidity}. 

That $N^\#$ is homotopy equivalent to $M\setminus H=N$ follows from the fact that both are aspherical manifolds with boundary
and isomorphic fundamental groups. 
\end{proof}

\subsection{Proof of Theorem \ref{mainthm}}

We showed so far 
that the existence of a hyperbolic metric on the $d$-fold 
covering $X$ of $M$ branched along
$\Sigma$ gives rise to a convex cocompact hyperbolic manifold $N_{d}$ 
with two boundary components, each of which 
is path isometric to the hypersurface $H$. The manifold is singular along $\Sigma\subset H$, with cone angle 
(or bending angle) $\pi/d$, and it is 
homotopy equivalent to the hyperbolic manifold $N$ with totally geodesic boundary obtained by cutting 
$M$ open along $H$. Note that $N$ is connected if and only if the hypersurface 
$H$ is non-separating. 

Construct a new manifold $W$ by gluing $2 d$ copies $N_{d}^i$ $(i=1,\dots, 2d)$ 
of $N_{d}$ along the boundary as follows. Let $\partial N_{d}^\pm$ be the two distinct
boundary components of $N$, and let $(\partial N_{d}^i)^\pm$ be the corresponding 
boundary components of $N_{d}^i$. Each of  these components contains
a copy of $\Sigma$ which decomposes the component into two connected components
$(H_{0,d}^i)^\pm, (H_{1,d}^i)^\pm$. For each odd $i\leq 2d$ identify 
$(H_{0,d}^i)^\pm$ with 
$(H_{0,d}^{i+1})^\pm$, and for even $i\leq 2d$ identify $(H_{1,d}^i)^\pm$ with 
$(H_{1,d}^{i+1})^\pm$. As the cone angle of $\partial N_{d}^{\pm}$ along $\Sigma$ equals $\pi/d$, 
the hyperbolic metrics on the bordered manifolds $N_{d}^i$ induce a smooth hyperbolic metric on 
$W$. 

\begin{lem}\label{twosheet}
The manifold $W$ is a two-sheeted unbranched covering of $X$.
\end{lem}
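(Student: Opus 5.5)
The plan is to exhibit $X$ (with its hyperbolic metric) as obtained from $2d$ copies of $N_d=N^\#$ glued in a specific pattern, and then to compare this pattern with the one defining $W$. Both spaces are built from copies of $N_d$ glued isometrically along pieces of their boundaries. So it suffices to construct a map $W\to X$ that is an isometry on each copy $N_d^i$ onto a piece of $X$ and is compatible with the gluings. Such a map is automatically a local isometry between closed hyperbolic manifolds, and hence a covering. The degree is then read off from volumes: ${\rm vol}(W)=2d\,{\rm vol}(N_d)$, and we will show ${\rm vol}(X)=d\,{\rm vol}(N_d)$, giving degree two.

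\textbf{Step 1: decompose $X$ into copies of $N_d$.} We use the proof of Proposition \ref{prop:boundaryrigidity}.

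\emph{Even $d$.} The hyperplanes $(\zeta^\#)^i(F^\#)$ cut $X$ into $d$ sectors $(\zeta^\#)^i(Z^\#)$, each isometric to $M_{\rm cut}^\#$. Each sector is cut by its copy of $H_1^\#$ into one copy of $N_d$. Thus $X$ is a union of $d$ copies of $N_d$, and ${\rm vol}(X)=d\,{\rm vol}(N_d)$.

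\emph{Odd $d$.} The fixed sets already cut $X$ into $d$ copies of $N_d$, giving the same volume count.

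\textbf{Step 2: record the gluing combinatorics in $X$.} Around $\Sigma^\#$ the copies of $N_d$ are glued cyclically. Going once around $\Sigma^\#$, the total angle is $2\pi$. Each copy of $N_d$ contributes two wedges of angle $\pi/d$ at $\Sigma^\#$: one at the $+$ copy of $\Sigma$ and one at the $-$ copy. Hence $2d$ wedges are arranged cyclically, alternately glued along $H_0$-pieces (the images of the $F^\#$'s) and $H_1$-pieces (the $H_1^\#$'s).

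The key observation is the following. In $X$, a copy of $N_d$ is glued along $H_0^+$ to the $H_0^-$ side of a neighbouring copy, as in the construction of $X$ from the $M_{\rm cut}^i$. In $W$, by contrast, $(H_{0,d}^i)^\pm$ is glued to $(H_{0,d}^{i+1})^\pm$, with the same sign. So $W$ is modelled on the double of the $d$-fold picture: one takes copies of $N_d$ indexed by $\{1,\dots,2d\}$ and tracks both boundary components $\pm$ separately.

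\textbf{Step 3: define the covering map.} Define $p:W\to X$ by sending $N_d^i$ to the copy of $N_d$ in $X$ sitting in the $\lceil i/2\rceil$-th position, up to orientation and the order $\pm$. Then check that each identification in $W$ maps to an identification in $X$.

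The check amounts to the following. In $X$, each copy of $N_d$ has its $+$ side glued to a neighbour's $-$ side. Each gluing in $W$ is sign-preserving and pairs two copies $i,i+1$ that map to the same copy or to adjacent copies, so it must be matched by composing with the reflection symmetries $j^\#$ and $j_0^\#$. These reflections exchange $\pm$ and fix $H_0^\#$, respectively $H_1^\#$.

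Concretely, I would realize $W$ as the orientation double cover of the "reflection" structure. Equivalently, $W$ is the fiber product of $X$ with the double cover associated with the homomorphism $\pi_1(X)\to\mathbb{Z}/2$ that counts parity of crossings of the hypersurface $\bigcup_i(\zeta^\#)^i(F^\#)\cup\bigcup H_1^\#$. Near $\Sigma^\#$ this hypersurface has $2d$ sheets, an even number, so the parity count is well defined. It is also well defined near the codimension two locus, since a small loop around $\Sigma^\#$ crosses $2d$ sheets.

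The main obstacle is this bookkeeping: verifying that the sign-matched gluing rule in $W$ agrees sheet by sheet with this double cover, including near $\Sigma$, where all $2d$ wedges meet. I would do this by checking the combinatorics locally around $\Sigma^\#$ and along each boundary piece away from $\Sigma$. Smoothness of the metric on $W$ was already noted in the paper, since the angles sum to $2\pi$. Once $p$ is shown to be a local isometry, the volume count from Step 1 gives that $p$ is a covering of degree $2$.
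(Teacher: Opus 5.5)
Your strategy differs from the paper's. You build a map $W\to X$ that is an isometry on each copy of $N_d$ and read off the degree from volumes. The paper instead observes that $W$ is two copies of $X$ when $H$ separates, and otherwise cuts $X$ along $F$ and glues two copies of the result crosswise. Your Step 3, however, has a genuine gap, and it is a real obstruction rather than bookkeeping.

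\textbf{The assignment fails, and no assignment can work.} The map $N_d^i\mapsto$ piece $\lceil i/2\rceil$ fails at once: $N_d^1$ and $N_d^2$ are glued along $H_0$-pieces. In $X$, an $H_0$-piece always separates two \emph{different} pieces; only the two $H_1$-pieces of a piece are glued to each other. More seriously, suppose $H$ is non-separating (so $N_d$ is connected) and $d\ge 3$. Then no choice of target pieces and isometries works.
\begin{itemize}
\item In $W$, for odd $i$, the copies $N_d^i$ and $N_d^{i+1}$ are glued along \emph{both} $H_0$-pieces. So $N_d^{i+1}$ is the neighbour of $N_d^i$ across each of them.
\item In $X$, the neighbours of the $c$-th piece across its two $H_0$-pieces are the $(c+1)$-st and the $(c-1)$-st pieces, since $H_0^{+,c}$ is glued to $H_0^{-,c+1}$ as in the construction of $X$.
\item Composing with $j^\#$, $j_0^\#$, or the involution exchanging the two boundary components of $N_d$ only swaps these two neighbours.
\end{itemize}
You would therefore need $c+1\equiv c-1 \pmod d$, which is false for $d\ge 3$. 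Put invariantly: viewing both as branched covers of $M$, crossing $H_0$ acts on the sheets of $W$ as an involution but on the sheets of $X$ as a $d$-cycle. Hence $W\to M$ cannot factor through $X\to M$.

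\textbf{The fallback is a different cover.} Your ``equivalent'' description does not describe $W$. The parity double cover you define is $X\times_M\hat M$, where $\hat M\to M$ is the double cover dual to $H$. It is a genuine unbranched double cover of $X$. In it, however, the piece across $H_0^+$ from $(c,\epsilon)$ is $(c+1,\epsilon+1)$, while the piece across $H_0^-$ is $(c-1,\epsilon+1)$. So on the $-$ side its cyclic gluing pattern runs opposite to that of $W$, and the two agree only when $d=2$.

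\textbf{What your method does prove.} It works when $H$ separates. Write $N_d=N_d'\sqcup N_d''$. Then $W$ splits into the part built from the $2d$ copies of $N_d'$ and the part built from the $2d$ copies of $N_d''$. Each part maps isometrically onto $X$: send consecutive copies alternately to $N'$-pieces and $N''$-pieces of $X$, twisting every other copy by the isometry $N_d'\to N_d''$ coming from $j^\#$. No conflicting constraint arises, and this is exactly the case the paper calls clear. The method also works for $d=2$.

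For non-separating $H$, the paper constructs the crosswise double of $X$ along $F$ and simply asserts that it is $W$, without comparing gluing patterns. So the identification you deferred as bookkeeping is precisely the unverified step. For $d\ge 3$ it cannot be realized by any piecewise map; the gluing pattern of $W$ on one side would have to be changed, for instance to that of $X\times_M\hat M$.
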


In fact, if the hypersurface $H$ in $M$ is non-separating, then 
the same holds true for the component $F$ containing $\Sigma$ of the fixed point set of 
the involution $j$ of $X$ specified in Fact \ref{factfixed}. 
Then $W$ is the two-sheeted covering of $X$ so that 
the preimage of $F$ consists of two components which separate $W$. 
If $H\subset M$ is separating, then $N$ consists of two connected 
components, and $W$ consists of two copies of $X$. 

\begin{proof}
The case that  $H$ is separating is clear from the above remark. Thus assume that 
$H$ is non-separating. Then the same holds true for the hypersurface $F$.
Cut $X$ open along $F$. The resulting manifold $Q$ is connected and has two 
boundary components $\partial Q^-,\partial Q^+$ 
which are homeomorphic and path isometric to $H$. The 
metric is singular along the two copies of the totally geodesic submanifold 
$\Sigma^\#$ in the two boundary components of $Q$. 

Glue a second copy $\hat Q$ of $Q$ to $Q$ along the boundary in such a way that the 
boundary component $\partial \hat Q^-$ is glued to the boundary component
$\partial Q^+$, and the boundary component $\partial \hat Q^+$ is glued to the 
boundary component $\partial Q^-$. The resulting manifold is equipped with a smooth 
hyperbolic metric, and admits an obvious two sheeted unbranched covering onto $X$.
The lemma follows. 
\end{proof}

\begin{proof}[Proof of Theorem \ref{mainthm}]
We divide the proof into two claims.

\smallskip\noindent
{\bf Claim 1:} \emph{Among the branched coverings of $M$ of even degree $d\in 2\mathbb{N}$, 
at most one can be homeomorphic 
to a hyperbolic manifold.}

\smallskip
\emph{Proof of Claim 1:}
We argue by contradiction and we assume that there are distinct multiples 
$d_1\not=d_2 \in 2\bbN$ such that the cyclic $d_i$-fold branched cover $X^{(d_i)}$ of $M$ 
admits a smooth hyperbolic 
metric for $i=1,2$. Then, for each $i=1,2$, 
\Cref{prop:boundaryrigidity} (see the end of the proof for an explicit statement) 
implies that there exists a hyperbolic cone manifold $M_{\rm cut}^{2\pi/d_i}$ with totally geodesic boundary $\partial M_{\rm cut}^{2\pi/d_i}$ 
homeomorphic and path isometric to $\partial M_{\rm cut}$, with singular set isometric to $\Sigma$, cone angle $2\pi/d_i$ along $\Sigma$, and $\pi_1(M_{\rm cut}^{2\pi/d_i})=\pi_1(M_{\rm cut})$.


Note that $\frac{d_1}{2}\frac{2\pi}{d_1}+\frac{d_2}{2}\frac{2\pi}{d_2}=2\pi$.
Therefore, we can glue $d_1/2$ copies of $M_{\rm cut}^{2\pi/d_1}$ and 
$d_2/2$ copies of $M_{\rm cut}^{2\pi/d_2}$ in cyclic order along the components of $\partial M_{\rm cut}^{2\pi/d_i} \setminus \Sigma$ to a smooth hyperbolic manifold $Y$.
An application of the Seifert--van Kampen theorem shows that the fundamental group 
of $Y$ is isomorphic to the fundamental group of the $(d_1+d_2)/2$-fold cyclic cover $X$ of $M$ branched along $\Sigma$. 
In particular, this fundamental group admits a finite group of automorphisms generated 
by an element $\zeta_\ast$ of order $(d_1+d_2)/2$ and an involution $j_\ast$ corresponding to the 
automorphisms induced by the homeomorphisms $\zeta$ and $j$ of $X$ (notations are as before). By the proof of 
\Cref{prop:boundaryrigidity}, for each $i=0,\dots,(d_1+d_2)/2-1$, the fixed point group of $\zeta_\ast^i \circ j_\ast \circ \zeta_\ast^{-i}$ is the fundamental group of an embedded codimension one submanifold $F_i$ that, by construction of the hyperbolic metric on $Y$, is already totally geodesic.
Moreover, for some $i$ the totally geodesic submanifolds $F_i$ and $F_{i+1}$ intersect with angle $2\pi/d_1$, while for other $i$ they intersect with angle $2\pi/d_2$.

By Mostow rigidity, there exist isometries $\zeta^\#, j^\#$ of the hyperbolic manifold $Y$ of order $(d_1+d_2)/2$ and $2$, respectively, that induce the outer automorphism given by $\zeta_\ast$ and $j_\ast$. By \Cref{intersection}, the fixed point set of $\zeta^\#$ is a 
codimension two totally geodesic submanifold $\Sigma^\#$ freely homotopic to $\Sigma$, and thus $\Sigma^\#=\Sigma$ since $\Sigma$ is already totally geodesic in $Y$. 
Similarly, by \Cref{prop:boundaryrigidity}, the fixed point set $(\zeta^\#)^i(F^\#)$ of the involution 
$(\zeta^\#)^i \circ j^\# \circ (\zeta^\#)^{-i}$
is a totally geodesic hyperplane freely homotopic 
to the manifold $F_i$ satisfying
$\pi_1(F_i)={\rm Fix}(\zeta_\ast^i \circ j_\ast \circ \zeta_\ast^{-i})$, and thus $(\zeta^\#)^i(F^\#)=F_i$ since $F_i$ is already hyperbolic.
However, as $\zeta^\#$ acts by rotation with a fixed angle in the normal bundle
of $\Sigma$, the intersection angle of $(\zeta^\#)^i(F^\#)$ and $(\zeta^\#)^{i+1}(F^\#)$ is
the same for all $i$. But this contradicts the fact that, by construction,
the intersection angle of $F_i$ with $F_{i+1}$ varies between $2\pi/d_1$ and $2\pi/d_2$,
completing the proof of the claim.
\hfill$\blacksquare$

\smallskip\noindent
{\bf Claim 2:} \emph{No branched covering of $M$ of odd degree $d\geq 3$
can be homeomorphic 
to a hyperbolic manifold.}

\smallskip
\emph{Proof of Claim 2:}
Assume that there exists a covering $X$ of $M$ branched along $\Sigma$ of odd degree $d$ which admits a hyperbolic metric.
By Proposition \ref{prop:boundaryrigidity}, there exists a hyperbolic cone manifold 
$N_d$ homotopy equivalent to the manifold $N$ with cone angle $\pi/d$ along the copies of 
$\Sigma$ in each boundary component of $N_d$. Glue $d$ copies of $N_d$ to 
the manifold $N=M\setminus H$  along the boundary
as described in Lemma \ref{twosheet}. Note that this is possible because $d$ is odd. The resulting 
manifold is homotopy equivalent to a double unbranched covering of the branched covering  
$X$ of $M$ of degree $\frac{d+1}{2}$ as in Lemma \ref{twosheet}, 
and it is equipped with a smooth hyperbolic metric.

On the other hand, as $W\to X$ is a two-sheeted unbranched covering, the hyperbolic metric
on $X$ lifts to a hyperbolic metric on $W$. 
However, it follows precisely as in the proof of Claim 1 that this leads to a contradiction. 
\hfill$\blacksquare$
\end{proof}


\end{document}